\newtheorem{theo}{Theorem}[section]
\newtheorem{prop}[theo]{Proposition}
\newtheorem{lem}[theo]{Lemma}
\newtheorem{letterthm}{Theorem}
\newtheorem{lettercor}[letterthm]{Corollary}
\theoremstyle{definition} 
\newtheorem*{defs*}{Definition}
\newtheorem*{claim}{Claim}
\newcommand{\R}{\mathbb{R}}
\newcommand{\C}{\mathbb{C}}
\newcommand{\Z}{\mathbb{Z}}
\newcommand{\N}{\mathbb{N}}
\newcommand{\Aut}{\operatorname{Aut}}
\newcommand{\ovt}{\mathbin{\overline{\otimes}}}
  \title{Spectral gap characterization of full type $\mathrm{III}$ factors}
  \author{Amine Marrakchi}
  \address{\'Ecole Normale Sup\'erieure \\ 45 rue d'Ulm 75230 Paris Cedex 05 \\ France}
\address{Laboratoire de Math\'ematiques d'Orsay \\ Universit\'e Paris-Sud \\ Universit\'e Paris-Saclay \\ 91405 Orsay \\ France}
\email{amine.marrakchi@ens.fr}
\thanks{A. Marrakchi is supported by ERC Starting Grant GAN 637601}
\begin{document}
	
\begin{abstract}
We give a spectral gap characterization of fullness for type $\mathrm{III}$ factors which is the analog of a theorem of Connes in the tracial case. Using this criterion, we generalize a theorem of Jones by proving that if $M$ is a full factor and $\sigma : G  \rightarrow \mathrm{Aut}(M)$ is an outer action of a discrete group $G$ whose image in $\mathrm{Out}(M)$ is discrete then the crossed product von Neumann algebra $M \rtimes_\sigma G$ is also a full factor. We apply this result to prove the following conjecture of Tomatsu-Ueda: the continuous core of a type $\mathrm{III}_1$ factor $M$ is full if and only if $M$ is full and its $\tau$ invariant is the usual topology on $\R$.
\end{abstract}	
	
\subjclass[2010]{46L10}
\keywords{full factor, spectral gap, type III factor, full continuous core, tau invariant, crossed product, ultraproduct}	

  \maketitle

	\bibliographystyle{plain}

\section{Introduction}
We say that a $\mathrm{II}_1$ factor $M$ is \emph{full} (or equivalently that $M$ does not have property ($\Gamma$)) if for every uniformly bounded net $(x_i)_i$ in $M$ such that $|| x_i a - a x_i ||_2 \rightarrow 0$ for all $a \in M$ we can find a net $\lambda_i \in \C$ such that $|| x_i - \lambda_i ||_2 \rightarrow 0$. Fullness can be intuitively understood as a \emph{strong factoriality} property: it says that an element which almost commutes with all elements of $M$ is almost trivial. This extensively studied property played an important role since the beginning of the theory of von Neumann algebras: Murray and von Neumann first used it to show the existence of a $\mathrm{II}_1$ factor which is not hyperfinite. Indeed, while a hyperfinite factor is never full, Murray and von Neumann were able to prove, using their so-called $14 \varepsilon$-lemma, that the $\mathrm{II}_1$ factor $\mathcal{L}(\mathbb{F}_2)$ generated by the free group on $2$ generators is full. In fact, their proof shows that the following, a priori much stronger, \emph{spectral gap} property is satisfied:
\[ \exists C >0, \; \forall x \in \mathcal{L}(\mathbb{F}_2), \; || x- \tau(x) ||^{2}_2 \leq C(|| xa-ax||^{2}_2+|| xb-bx||^{2}_2) \]
where $a$ and $b$ are the canonical generators of $\mathbb{F}_2$. It is a spectral gap property because it says that $0$ is a simple isolated point in the spectrum of the positive operator $ |\lambda(a)-\rho(a)|^2+|\lambda(b)-\rho(b)|^2 \in B(\ell^2(\mathbb{F}_2))$ where $\lambda$ and $\rho$ are the left and right regular representations of $\mathbb{F}_2$. Later on, Connes proved that, in fact, \emph{every} full $\mathrm{II}_1$ factor satisfies such a spectral gap property \cite[Theorem 2.1]{connes1976classification}. More precisely, he proved that for every full separable $\mathrm{II}_1$ factor $M$, there exist a family $a_1,\dots, a_k \in M$ and a constant $C > 0$ such that for all $x \in M$ we have
\[ || x- \tau(x) ||^{2}_2 \leq C \sum_k || xa_k-a_k x||^{2}_2 \]
This deep theorem of Connes was one of the key steps in his proof of the uniqueness of the injective $\mathrm{II}_1$ factor. One of its most striking corollaries is the fullness of the tensor product $M \ovt N$ of two full $\mathrm{II}_1$ factors $M$ and $N$ \cite[Corollary 2.3]{connes1976classification}. This theorem is also very remarkable because such an equivalence between a spectral gap property and its weaker "bounded" counterpart fails in many other similar situations (see \cite{popa2012classification}[Remark 2.2]). 

In this paper, we are interested in full type $\mathrm{III}$ factors. The correct definition of fullness for arbitrary factors, as given by Connes in \cite{ConnesAlmostPeriodic}, is the following one: a factor $M$ is said to be \emph{full} if for every uniformly bounded net $(x_i)_i$ in $M$ such that $x_i \varphi - \varphi x_i \rightarrow 0$ for all $\varphi \in M_*$ we can find a net $\lambda_i \in \C$ such that $x_i - \lambda_i \rightarrow 0$ in the strong topology. Then our main result is the following type $\mathrm{III}$ analog of the theorem of Connes.

\begin{letterthm} \label{main}
Let $M$ be a full $\sigma$-finite type $\mathrm{III}$ factor. Then there exist a faithful normal state $\varphi$ on $M$, a family $\xi_1, \cdots, \xi_n \in L^{2}(M)^{+}$ with $\xi_k^{2} \leq \varphi$ for all $k$ such that for all $x \in M$ we have
\[ ||x-\varphi(x)||_\varphi^2 \leq  \sum_k || x \xi_k-\xi_k  x ||^2  \]
\end{letterthm}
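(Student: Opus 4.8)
The plan is to argue by contradiction, following the scheme of Connes' proof in the tracial case \cite{connes1976classification} but carried out inside the standard Hilbert space $L^2(M)$ equipped with its natural positive cone, with a fixed faithful normal state in place of the trace. First I would record a reduction: if the asserted inequality holds for some $\varphi,\xi_1,\dots,\xi_n$ with the constant $1$ replaced by an arbitrary $C>0$, then it holds with constant $1$ for the family consisting of $\lceil C\rceil$ copies of $(\xi_k)_k$, so it is enough to contradict the negation "for every $C$", i.e.: for \emph{every} faithful normal state $\varphi$, every finite family $\xi_1,\dots,\xi_n\in L^2(M)^+$ with $\xi_k^2\le\varphi$ (that is, $\omega_{\xi_k}\le\varphi$), and every $\varepsilon>0$, there is $x\in M$ with $\varphi(x)=0$, $\|x\|_\varphi=1$ and $\sum_k\|x\xi_k-\xi_kx\|^2<\varepsilon$. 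Fixing one faithful normal state $\varphi$ and letting $\big((\xi_1,\dots,\xi_n),\varepsilon\big)$ run over the obvious directed set $I$, this yields a net $(x_i)_{i\in I}$ in $M$ with $\varphi(x_i)=0$, $\|x_i\|_\varphi=1$ and $\|x_i\xi-\xi x_i\|\to0$ for \emph{every} fixed $\xi\in L^2(M)^+$ with $\xi^2\le\varphi$. I will use repeatedly the elementary facts that $\|x\xi\|^2=\omega_\xi(x^*x)\le\lambda\varphi(x^*x)$ and $\|\xi x\|^2=\omega_\xi(xx^*)\le\lambda\varphi(xx^*)$ whenever $\omega_\xi\le\lambda\varphi$, that $\|x^*\xi-\xi x^*\|=\|x\xi-\xi x\|$ for $\xi\in L^2(M)^+$ (because the $\ast$-operation of $L^2(M)$ fixes the natural cone), and that the functionals $\omega_\xi$ with $\omega_\xi\le\lambda\varphi$ for some $\lambda>0$ have norm-dense linear span in $M_*$ since $\varphi$ is faithful; the last point is what will connect the present notion of asymptotic commutation to the one in Connes' definition of fullness \cite{ConnesAlmostPeriodic}.

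The heart of the matter is the passage from $(x_i)_i$ to a \emph{uniformly bounded} net, since only such nets feed into the definition of fullness while a priori $\|x_i\|$ is not controlled. Here I would imitate Connes' spectral truncation: pass to a subnet along which $\|x_i^*\|_\varphi$ converges (or, to bypass the asymmetry between $x\mapsto\varphi(x^*x)$ and $x\mapsto\varphi(xx^*)$, replace $x_i$ by the self-adjoint element of $M_2(M)$ with off-diagonal entries $x_i,x_i^*$, relative to the state $\varphi\otimes\mathrm{tr}_2$); write a polar/spectral decomposition $x_i=v_ih_i$ with $h_i=(x_i^*x_i)^{1/2}$, so that $\|h_i\|_\varphi=1$; and replace $x_i$ by $v_i\,f(h_i)$ for a smooth, compactly supported $f$ equal to the identity near $0$. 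That the resulting net still asymptotically commutes with the $\xi$'s would follow from a Duhamel/Fourier estimate $\|f(h)\eta-\eta f(h)\|\le c_f\|h\eta-\eta h\|$, valid for $h=h^*\in M$, $\eta\in L^2(M)$ and $c_f$ depending only on $f$, together with the fact — to be extracted from $\|x_i\xi-\xi x_i\|\to0$ via the bounds above — that $h_i$ asymptotically commutes with the $\xi$'s in an appropriate ($L^1$-type) sense. The delicate point, which I expect to be the \textbf{main obstacle} (it is already the subtle step in the tracial proof), is to guarantee that the truncation does not annihilate the net, i.e.\ that $\varphi\big(h_i^2\,1_{(K,\infty)}(h_i)\big)$ tends to $0$ uniformly in $i$ for $K$ large. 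The failure of this amounts to the appearance of asymptotically central projections of small $\varphi$-weight but large "mass", and ruling it out is where fullness of $M$ has to be used a first time — or where the net of the first paragraph must instead be chosen with more care (e.g.\ by a diagonal argument over $I$ forcing each $x_i$ to commute approximately with the relevant spectral projections as well), exactly as in \cite{connes1976classification}.

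Granting a uniformly bounded net $(y_i)_i$ in $M$ with $\|y_i\xi-\xi y_i\|\to0$ for all $\xi\in L^2(M)^+$, $\xi^2\le\varphi$, and $\liminf_i\|y_i-\varphi(y_i)\|_\varphi>0$, the contradiction follows quickly. A direct computation in the standard form gives $\|y_i\omega_\xi-\omega_\xi y_i\|_{M_*}\le\|\xi\|\big(\|y_i\xi-\xi y_i\|+\|y_i^*\xi-\xi y_i^*\|\big)$ for $\xi\in L^2(M)^+$; by $\|y_i^*\xi-\xi y_i^*\|=\|y_i\xi-\xi y_i\|$ this tends to $0$ for $\omega_\xi\le\lambda\varphi$, and then, by uniform boundedness of $(y_i)$ and an $\varepsilon/3$-argument over the norm-dense span of such functionals, $\|y_i\psi-\psi y_i\|\to0$ for \emph{all} $\psi\in M_*$. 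Thus $(y_i)$ is a bounded asymptotically central net in the sense of \cite{ConnesAlmostPeriodic}, so fullness of $M$ produces scalars $\lambda_i$ with $y_i-\lambda_i\to0$ strongly; taking $\lambda_i=\varphi(y_i)$ gives $\|y_i-\varphi(y_i)\|_\varphi\to0$, contradicting the last hypothesis. This contradiction proves the theorem. (Equivalently, the argument can be organised inside the Ocneanu ultraproduct $M^\omega$ and the characterisation of fullness as $M'\cap M^\omega=\C 1$; the truncation step is then precisely what converts the failure of the spectral gap into an honest bounded element of $M^\omega$ commuting with $M$ and not scalar.)
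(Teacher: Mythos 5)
Your plan has the right overall shape (contradiction, pass to a net $(x_i)$ that asymptotically commutes with vectors of the form $\xi$ with $\xi^2\le\varphi$, then try to extract a bounded centralizing net), and you correctly locate the real difficulty: the net $(x_i)$ is a priori unbounded, and bounded spectral truncation may annihilate it. But the proposal stops precisely there. The paragraph beginning ``Here I would imitate Connes' spectral truncation'' sketches a Duhamel-type smoothing estimate and then says that the key issue --- controlling $\varphi(h_i^2\,1_{(K,\infty)}(h_i))$ uniformly in $i$ --- is ``where fullness of $M$ has to be used a first time --- or where the net \ldots\ must instead be chosen with more care.'' None of this is carried out, and in fact the underlying premise is wrong: one cannot, in general, choose or repair the net so that a bounded truncation survives. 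Indeed the paper proves the opposite. Working in the Groh--Raynaud ultraproduct, with $\alpha=(x_i\xi)^\omega$ and $e=\mathrm{supp}(\xi^\omega)$, it shows via Lemma~\ref{full_unique} (a nontrivial argument involving the Connes--Radon--Nikodym cocycle between $\alpha^*\alpha$ and $\varphi^\omega$, and the fact that $M_\omega=\C$) that $e\alpha=0$ always. Translating back, this says exactly that the bounded truncation $z_i=x_i\,1_{[0,\varepsilon^{-1})}(x_i^*x_i)$ satisfies $(z_i\xi)^\omega=0$, i.e.\ the truncation \emph{is} annihilated, unavoidably. Your strategy of producing a bounded asymptotically central element directly from $(x_i)$ therefore cannot succeed.

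What is missing from your proposal is the entire second half of the paper's argument, which is the genuinely new mechanism: once one knows the mass of $x_i$ escapes into the part of the spectrum above $\varepsilon^{-1/2}$, the support projections of the surviving pieces $y_i=x_i-z_i$ are small in $\varphi$-measure, and the Connes--St\o{}rmer spike lemma (Lemma~\ref{spike}), together with part~(3) of Proposition~\ref{skew}, converts each $y_i$ into a \emph{nonzero projection} $p$ with $\varphi(p)\le\varepsilon$ and $\sum_k\|p\xi_k-\xi_kp\|^2\le 4\sqrt{2\varepsilon}\,\sum_k\|p\xi_k\|^2$. A Zorn-type maximality argument (Lemma~\ref{maximality}), using additivity of the skew information under orthogonal sums and the homogeneity $M\simeq p^\perp Mp^\perp$ of type $\mathrm{III}$ factors, then patches such small projections into a net $(p_i)$ with $\varphi(p_i)=\tfrac12$ and $I(p_i,\xi)\to0$ for all $\xi$; by Proposition~\ref{skew}.(1) this is a nontrivial centralizing net, contradicting fullness. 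None of this spike/patching machinery appears in your write-up, and it cannot be replaced by the smoothing estimate you invoke, since that estimate presupposes exactly the concentration property that fails. So as written the proposal has a genuine gap at its central step, and the route sketched to fill it is the one the paper shows to be blocked.
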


We also give an application of Theorem \ref{main} to fullness of crossed products which generalizes a theorem of Jones \cite{jones1982central}.

\begin{letterthm} \label{crossed_product}
Let $M$ be a full factor and $G$ a discrete group. Let $\sigma : G \rightarrow \mathrm{Aut}(M)$ be an outer action whose image in $\mathrm{Out}(M)$ is discrete. Then the crossed product $M \rtimes_\sigma G$ is also a full factor.
\end{letterthm}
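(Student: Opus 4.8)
The plan is to establish, for $N := M \rtimes_\sigma G$, the very conclusion of Theorem~\ref{main}: a faithful normal state $\psi$ on $N$, finitely many $\zeta_1,\dots,\zeta_m \in L^2(N)^+$ with $\zeta_k^2 \le \psi$, and $C>0$ with $\|x-\psi(x)\|_\psi^2 \le C\sum_k\|x\zeta_k-\zeta_k x\|^2$ for all $x\in N$. This suffices: $N$ is a factor because $\sigma$ is outer (a central element has Fourier coefficients $v$ with $a v = v\,\sigma_g(a)$ for all $a$, forcing $v^*v,vv^*\in Z(M)=\C$ and hence $\sigma_g$ inner, so $v=0$ for $g\ne e$, while $Z(M)=\C$); and such an inequality forces fullness, since for a bounded net $(x_i)$ with $x_i\varphi'-\varphi' x_i\to 0$ for all $\varphi'\in N_*$ one gets $\|x_i\zeta_k-\zeta_k x_i\|\to 0$, hence $\|x_i-\psi(x_i)\|_\psi\to 0$, and applying the inequality also to $x_i^*$ (using $J_N$ to identify $\|x_i^*\zeta_k-\zeta_k x_i^*\|$ with $\|x_i\zeta_k-\zeta_k x_i\|$ when $\zeta_k$ is $J_N$-fixed) yields $x_i-\psi(x_i)\to 0$ $*$-strongly.

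Concretely I take $\psi=\varphi\circ E$ for the canonical expectation $E\colon N\to M$ and a suitable faithful normal state $\varphi$ on $M$, and I take $\zeta_k=\xi_k$ running over a single finite family $\xi_1,\dots,\xi_n\in L^2(M)^+\subseteq L^2(N)^+$ with $\xi_k^2\le\varphi$ (one checks $\xi_k^2\le\psi$ in $N_*$). With respect to the orthogonal decomposition $L^2(N,\psi)=\bigoplus_{g\in G}L^2(M)u_g$, writing $x=\sum_g x^g u_g$ and $\varphi_g:=\varphi\circ\sigma_{g^{-1}}$ (so $\|a u_g\|_\psi=\|a\|_{\varphi_g}$), one has $\|x-\psi(x)\|_\psi^2=\|x^e-\varphi(x^e)\|_\varphi^2+\sum_{g\ne e}\|x^g\|_{\varphi_g}^2$, and a direct computation identifies the $u_g$-component of $x\xi_k-\xi_k x$ with an explicit vector built from $x^g$, $\xi_k$ and $\sigma_g$ whose squared norm measures the failure of $x^g$ to intertwine $\id$ and $\sigma_g$ against $\xi_k$; its $g=e$ part is exactly $\|x^e\xi_k-\xi_k x^e\|^2$. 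Hence the inequality for $N$ reduces to two inputs: (a) Theorem~\ref{main} for $M$ (or Connes' theorem in the tracial case) applied to $x^e=E(x)$, and (b) a \emph{uniform} spectral gap for $\{\sigma_g:g\ne e\}$, i.e. a constant $c>0$ with
\[ c\,\|b\|_{\varphi_g}^2 \ \le\ \sum_k\big\| b\,\sigma_g(\xi_k)-\xi_k\,b\big\|^2 \qquad\text{for all } g\in G\setminus\{e\}\text{ and }b\in M, \]
which, after transporting by $\sigma_{g^{-1}}$, says that every approximate $(\id,\sigma_g)$-intertwiner is controlled, with one constant valid for all $g\ne e$ simultaneously.

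To produce such a family I combine Theorem~\ref{main} with the discreteness hypothesis. Since $M$ is full, $\operatorname{Inn}(M)$ is closed in $\operatorname{Aut}(M)$ and $\operatorname{Out}(M)$ is Hausdorff, so discreteness of the image of $G$ means there is a neighbourhood of $\operatorname{Inn}(M)$ missing every $\sigma_g$, $g\ne e$; unwinding the topology of $\operatorname{Aut}(M)$ yields finitely many vectors in $L^2(M)^+$ and an $\epsilon>0$ witnessing, uniformly in $g\ne e$, that $\sigma_g$ is $\epsilon$-far from $\operatorname{Ad}(u)$ for every unitary $u\in M$. Adjoining these to the spectral-gap family of Theorem~\ref{main} and rescaling so that a single $\varphi$ dominates all of them, I obtain one family $\xi_1,\dots,\xi_n$, $\xi_k^2\le\varphi$, realizing simultaneously the inequality of Theorem~\ref{main} and this uniform outerness. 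Estimate (b) is then proved by contradiction: were it to fail, there would be $g_j\ne e$ and contractions $\beta_j\in M$ with $\|\beta_j\|_{\varphi_{g_j}}$ bounded away from $0$ but $\sum_k\|\beta_j\sigma_{g_j}(\xi_k)-\xi_k\beta_j\|^2\to 0$; feeding this into Theorem~\ref{main} shows $\beta_j^*\beta_j$ is asymptotically scalar (it asymptotically commutes with the spectral-gap family), and likewise $\beta_j\beta_j^*$ (which asymptotically commutes with the transported spectral-gap family, valid for the state $\varphi_{g_j}$), so $\beta_j$ is asymptotically a scalar multiple of a unitary $v_j$ with $\operatorname{Ad}(v_j^*)\circ\sigma_{g_j}$ asymptotically fixing every $\xi_k$ — contradicting the uniform outerness.

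The crux is the uniform spectral gap of step (b), and two points inside it require care. First, the implication "$\sigma_g$ outer $\Rightarrow$ no approximate $(\id,\sigma_g)$-intertwiner" must be made \emph{uniform} in $g$: for a fixed $g$ this already follows from fullness of $M$ (via Theorem~\ref{main} together with $\operatorname{Inn}(M)$ being closed), but the Fourier tail $\sum_{g\ne e}\|x^g\|_{\varphi_g}^2$ cannot be summed without one constant serving all $g$ at once, and supplying it is exactly what the discreteness of the image in $\operatorname{Out}(M)$ buys. Second, since $G$ need not be amenable, $\varphi$ cannot be chosen $\sigma$-invariant, so one must track how $u_g$ acts on $L^2(M)u_g$ and how $\sigma_g$ extends to $L^2(M)$; these differ by Connes cocycles, which have to be absorbed by working systematically with the twisted states $\varphi_g=\varphi\circ\sigma_{g^{-1}}$ (or by passing to a balanced weight). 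I expect this modular bookkeeping, rather than any conceptual difficulty, to be the most laborious part of the argument.
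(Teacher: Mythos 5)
Your conceptual route is the same as the paper's: reduce to a spectral gap inequality for $N = M\rtimes_\sigma G$ with state $\psi=\varphi\circ E$, Fourier-decompose $x$ and $x\xi_k-\xi_k x$, use Theorem~\ref{main} for the $g=e$ component, and establish a \emph{uniform} quantitative outerness estimate for $\{\sigma_g : g\ne e\}$ from the discreteness hypothesis together with fullness. The paper isolates the latter as Lemma~\ref{neighborhood_spectral_gap} and the decomposition step is the same as yours (modulo a convention, see below). So the architecture is right.

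However, there is a genuine gap in your proof of the uniform estimate~(b), and it is exactly the bounded-versus-unbounded subtlety that makes spectral gap strictly stronger than fullness. The negation of (b) produces, after normalizing by homogeneity, elements $b_j\in M$ with $\|b_j\|_{\varphi_{g_j}}=1$ and $\sum_k\|b_j\sigma_{g_j}(\xi_k)-\xi_k b_j\|^2\to 0$, but with \emph{no} a priori control on $\|b_j\|_\infty$. You silently assume the $\beta_j$ are contractions; that is not available, and without it the chain ``$\beta_j^*\beta_j$ asymptotically commutes with the spectral-gap family, hence is asymptotically scalar'' does not run, since multiplying two small $L^2$-errors by an unbounded $\beta_j^*$ destroys the estimate, and one cannot invoke fullness or $M_\omega=\C$ for an unbounded net. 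The paper handles precisely this point in the proof of Lemma~\ref{neighborhood_spectral_gap}: it first perturbs $x_i$ to be invertible (Lemma~\ref{invertible}), then takes the polar decomposition $x_i=u_i|x_i|=|x_i^*|u_i$ and uses Proposition~\ref{skew}(3), i.e.\ $I(|x_i|,\xi)+I(|x_i^*|,\theta_i(\xi))\le\|x_i\xi-\theta_i(\xi)x_i\|^2$, to control $|x_i|$ and $|x_i^*|$ simultaneously in $L^2$ without ever bounding $\|x_i\|_\infty$; only then is $x_i$ replaced by the unitary $u_i$ and the outerness contradiction invoked. Your argument needs this mechanism (or an equivalent) to be correct.

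A smaller point: your Fourier convention $x=\sum_g x^g u_g$ is what produces the twisted states $\varphi_g=\varphi\circ\sigma_{g^{-1}}$ and the modular bookkeeping you flag as laborious. The paper instead writes $x=\sum_g u_g x^g$, for which $\|x\|_\psi^2=\sum_g\|x^g\|_\varphi^2$ with the \emph{same} $\varphi$, and the identity $x\xi_k-\xi_k x=\sum_g u_g(x^g\xi_k-\sigma_{g^{-1}}(\xi_k)x^g)$ makes the Pythagorean decomposition immediate with no cocycle corrections. Switching conventions removes your step (b) twisting and lets Lemma~\ref{neighborhood_spectral_gap} apply verbatim with $\theta=\sigma_{g^{-1}}$.
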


Using this theorem, we give a positive answer to a conjecture of Tomatsu-Ueda \cite{tomatsu2014characterization} which characterizes the fullness of continuous cores of type $\mathrm{III}_1$ factors in terms of their $\tau$-invariant introduced in \cite{ConnesAlmostPeriodic}. The only if part was first observed by Shlyakhtenko in \cite{shlyakhtenko2004classification}. The if part is much harder and was obtained by Tomatsu-Ueda in particular cases, namely for free product factors as well as generalized Bernoulli crossed products \cite{tomatsu2014characterization}. Using Theorem \ref{crossed_product} in combination with the key idea of \cite[Lemma 6]{tomatsu2014characterization}, we can prove it in full generality.

\begin{lettercor} \label{ueda}
Let $M$ be a factor of type $\mathrm{III}_1$ with separable predual. Then its continuous core $c(M)$ is full if and only if $M$ is full and the canonical morphism $\delta : \R \rightarrow \mathrm{Out}(M)$ is a homeomorphism on its range (i.e \ the $\tau$-invariant of $M$ is the usual topology).
\end{lettercor}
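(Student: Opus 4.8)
The plan is to treat the two implications separately. Throughout, fix a faithful normal state $\varphi$ on $M$ and write $c(M) = M \rtimes_{\sigma^\varphi} \R$ for the continuous core, equipped with its canonical strongly continuous one-parameter group $(\lambda_t)_{t \in \R}$ of unitaries implementing the modular flow; since $M$ is of type $\mathrm{III}_1$, $c(M)$ is a factor of type $\mathrm{II}_\infty$. The ``only if'' direction is due to Shlyakhtenko \cite{shlyakhtenko2004classification}, so I would only recall the idea: if $c(M)$ is full then $M$ is full, and if $\delta$ failed to be a homeomorphism onto its range one could produce $t_n \not\to 0$ together with unitaries $u_n \in M$ such that $\Ad(u_n) \circ \sigma^\varphi_{t_n} \to \id$ in $\Aut(M)$; the unitaries $u_n^*\lambda_{t_n} \in c(M)$ would then asymptotically commute with $c(M)_*$ while staying uniformly away from the scalars (as $t_n \not\to 0$ keeps $\lambda_{t_n}$ away from $\C$), contradicting fullness of $c(M)$.

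For the ``if'' direction, assume $M$ is full and $\delta : \R \to \Out(M)$ is a homeomorphism onto its range. Then $\delta$ is in particular injective, so $T(M) = \ker \delta = \{0\}$ and $\sigma^\varphi_{t_0}$ is outer for every $t_0 \neq 0$; and since $\delta$ is a topological embedding, the image $\delta(t_0 \Z)$ of the discrete subgroup $t_0\Z \subseteq \R$ is discrete in $\Out(M)$. Hence Theorem \ref{crossed_product} applies to the outer $\Z$-action $n \mapsto \sigma^\varphi_{nt_0}$ on the full factor $M$ and shows that $N_{t_0} := M \rtimes_{\sigma^\varphi_{t_0}} \Z$ is a full factor, for every $t_0 \neq 0$. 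Realising $N_{t_0} = (M \cup \{\lambda_{t_0}\})''$ inside $c(M)$, the subfactors $N_{1/n!}$ ($n \geq 1$) form an increasing sequence whose union contains $M$ and all $\lambda_q$ with $q \in \Q$, hence is $\sigma$-weakly dense in $c(M)$ by strong continuity of $t \mapsto \lambda_t$. One cannot conclude immediately that $c(M)$ is full: a weak closure of an increasing union of full $\mathrm{II}_1$ factors need not be full — for instance an infinite tensor product $\bigotimes_{i \geq 1} P_i$ of full $\mathrm{II}_1$ factors, which exhausts the full factors $P_1 \ovt \cdots \ovt P_m$, is never full. The real work, for which I would follow the key idea of \cite[Lemma 6]{tomatsu2014characterization}, is to rule out asymptotically central sequences of $c(M)$ that ``escape to infinity'' along this exhaustion.

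Concretely, I would pass to the Ocneanu ultrapower $c(M)^\omega$, so that fullness of $c(M)$ becomes the statement $c(M)' \cap c(M)^\omega = \C$; since $M \subseteq c(M)$ it suffices to prove $M' \cap c(M)^\omega = \C$. The crux is to show, combining the spectral gap of the full factor $M$ (Theorem \ref{main}) with the fullness of the discrete cores $N_{t_0}$, that every bounded sequence of $c(M)$ asymptotically commuting with $M$ reduces — modulo an asymptotically scalar perturbation — to a sequence of the form $(u_n^*\lambda_{t_n})_n$ with $u_n \in \mathcal{U}(M)$, $t_n \in \R$ and $\Ad(u_n) \circ \sigma^\varphi_{t_n} \to \id$ in $\Aut(M)$. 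The hypothesis on $\delta$ then finishes the proof: $\Ad(u_n) \circ \sigma^\varphi_{t_n} \to \id$ forces $\delta(t_n) \to e$ in $\Out(M)$, hence $t_n \to 0$; therefore $\lambda_{t_n} \to 1$ strongly and $\sigma^\varphi_{t_n} \to \id$, so $\Ad(u_n) \to \id$, so by fullness of $M$ the sequence $(u_n)$ is asymptotically scalar and so is $(u_n^*\lambda_{t_n})$. Thus $M' \cap c(M)^\omega = \C$, and a fortiori $c(M)' \cap c(M)^\omega = \C$.

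I expect the genuinely hard part to be the reduction described in the previous paragraph. The naive approach — Fourier-expanding an asymptotically central sequence of $c(M)$ over the modular unitaries and projecting it into $M$ — is simply unavailable, because $\R$ is not discrete and there is no normal conditional expectation from $c(M)$ onto $M$ (nor onto any $N_{t_0}$). This is exactly the point at which the mechanism of \cite[Lemma 6]{tomatsu2014characterization} is needed: one uses the spectral gap of $M$ to localise such a sequence near the identity of the modular flow, and then compares it with the discrete cores $N_{t_0}$ for small $t_0$ — whose fullness, supplied by Theorem \ref{crossed_product}, precisely says that they carry no central sequences beyond those already coming from $M^\omega$ and from the $\lambda_{nt_0}$ directions — in order to bring the continuous direction under control. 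Everything else (producing the factors $N_{t_0}$, the final cleanup from the topology of $\delta$ and fullness of $M$, and the ``only if'' direction) is comparatively routine.
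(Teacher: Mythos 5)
Your proposal is correct and takes essentially the same route as the paper: for the ``if'' direction one applies Theorem~\ref{crossed_product} to the outer $\Z$-action $n\mapsto\sigma^{\phi}_{nT}$ (the paper uses a single $T>0$ rather than your chain $N_{1/n!}$, which is immaterial) and then invokes \cite[Lemma~6]{tomatsu2014characterization} as a black box, exactly as you do. For the ``only if'' direction the paper gives its own short self-contained argument using continuity of the quantized modulus $\alpha\mapsto\widetilde{\alpha}:\mathrm{Aut}(M)\to\mathrm{Aut}(c(M))$ together with the $\R^{*}_{+}$-equivariance $\theta_\lambda(u_i\phi^{{\rm i}t_i})=\lambda^{{\rm i}t_i}u_i\phi^{{\rm i}t_i}$, rather than citing Shlyakhtenko, but this is the easy half and the substance is the same.
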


The article is organized as follows. Section 2 is devoted to notations and preliminaries. In particular, we recall the constructions of the \emph{Groh-Raynaud ultraproduct} of von Neumann algebras. In Section 3, we recall the commutation techniques developped in \cite{connes_stormer}. In Section 4, we first give a new proof of the spectral gap theorem in the $\mathrm{II}_1$ case. Our proof is quite different and perhaps easier than Connes's original proof, even though it is inspired from it. The main difference is that we use Groh-Raynaud ultraproducts instead of \emph{singular states}. After that, we adapt this proof to the type $\mathrm{III}$ case in order to prove Theorem \ref{main}. Finally, in the last section of the paper, we apply our result to crossed products following the lines of \cite{jones1982central}.

\subsubsection*{Acknowledgment}
We are very grateful to our advisor Cyril Houdayer for attracting our attention to this problem and for his help and suggestions throughout this work. We also thank Yoshimichi Ueda for explaining to us \cite[Lemma 6]{tomatsu2014characterization} and for his useful comments.

\tableofcontents

\section{Preliminaries}

Let $M$ be a von Neumann algebra. We denote by $M_*$ its predual. We say that $M$ is \emph{$\sigma$-finite} if it admits a faithful normal state. We denote by $M^+$ and $M_*^+$ the positive parts of $M$ and $M_*$. We denote by $\widehat{M_*^+}$ the set of all semifinite normal weights on $M$ and by $\widehat{M^+}$ the set of all positive operators affiliated with $M$. The strong topology on $M$ is the topology induced by the family of semi-norms $x \mapsto ||x ||_\varphi :=\varphi(x^*x)^{\frac{1}{2}}$ for $\varphi \in M_*^+$ and the $*$-strong topology is the topology induced by the family of semi-norms $x \mapsto ||x ||_\varphi +||x^* ||_\varphi $ for $\varphi \in M_*^+$. We denote by $\mathcal{U}(M)$ the group of unitaries of $M$ and by $\mathcal{P}(M)$ the lattice of all projections of $M$. Both are implicitely equipped with the restriction of the strong topology. We denote by $\mathrm{Aut}(M)$ the group of automorphisms of $M$. Every $\theta \in \mathrm{Aut}(M)$ induces by the precomposition $\varphi \mapsto \theta(\varphi):=\varphi \circ \theta^{-1}$ an automorphism of $M_*$. Then $\mathrm{Aut}(M)$ is equipped with the topology of pointwise norm convergence on $M_*$. Hence a net $\theta_i \in \mathrm{Aut}(M), i \in I$ converges to $\theta$ if and only if $\theta_i(\varphi) \rightarrow \theta(\varphi)$ in norm for every $\varphi \in M_*$. We denote by $\mathrm{Ad}(u) : \mathcal{U}(M) \rightarrow \mathrm{Aut}(M)$ the continuous group homomorphism given by $\mathrm{Ad}(u)(x)=uxu^*$. The image of this homomorphism, the group of inner automorphisms, is denoted $\mathrm{Inn}(M)$ and the quotient group $\mathrm{Aut}(M)/\mathrm{Inn}(M)$ is denoted $\mathrm{Out}(M)$. The quotient map is denoted $\epsilon : \mathrm{Aut}(M) \rightarrow \mathrm{Out}(M)$.

\subsection{Modular theory}
Let $M$ be a von Neumann algebra. We denote by $c(M)$ its \emph{canonical core} \cite[Chapter $\mathrm{XII}.6$]{TakesakiII}. It is a von Neumann algebra canonically associated to $M$ with the following properties. First, $c(M)$ contains $M$ as a von Neumann subalgebra. There is a canonical faithful semifinite normal trace $\tau$ on $c(M)$ and a canonical continuous action of $\R^*_+$ on $c(M)$, called the \emph{non-commutative flow of weights} and denoted by $\lambda \mapsto \theta_\lambda \in \mathrm{Aut}(c(M))$, such that $\theta_\lambda(\tau)=\frac{1}{\lambda} \tau$ for all $\lambda > 0$. The fixed point subalgebra $\{ x \in c(M) \mid \forall \lambda \in \R^*_+, \; \theta_\lambda(x)=x \}$ is exactly $M$. By \cite{haagerup1979lp}, there is a canonical bijection $ \iota : \widehat{M_*^+} \rightarrow \{ h \in \widehat{c(M)^+} \mid \forall \lambda \in \R^*_+, \; \theta_\lambda(h)=\lambda h \}$, called the \emph{Haagerup correspondence}, which is additive, positively homogeneous and satisfies $\iota(a\varphi a^*)=a \iota(\varphi)a^*$ for all $a \in M$. Hence, by a harmless abuse of notation we will identify $\varphi$ and $\iota(\varphi)$ and view every semifinite normal weight on $M$ as a positive operator affiliated with $c(M)$. If $\varphi$ is a faithful semifinite normal weight on $M$ then $(\varphi^{{\rm i} t})_{ t \in \R}$ is a one-parameter group of unitaries in $c(M)$ which satisfies $\theta_\lambda(\varphi^{{\rm i} t})=\lambda^{{\rm i} t}\varphi^{{\rm i} t}$ for all $t \in \R$ and $\lambda \in \R^*_+$. Therefore if $x \in M$, then $\theta_\lambda(\varphi^{{\rm i} t}x\varphi^{-{\rm i} t})=\varphi^{{\rm i} t}x\varphi^{-{\rm i} t}$ for all $\lambda$ which means that $\varphi^{{\rm i} t}x\varphi^{-{\rm i} t} \in M$. Hence we have a one-parameter group of automorphisms $t \mapsto \sigma_t^\varphi \in  \mathrm{Aut}(M)$, called the \emph{modular flow} of $\varphi$, which is given by $\sigma_t^{\varphi}(x)= \varphi^{{\rm i} t}x\varphi^{-{\rm i} t}$. An element $x \in M$ is said to be \emph{$\varphi$-analytic} if the map $t \in \R \mapsto \sigma_{t}^\varphi(x) \in M$ can be extended to a holomorphic map defined on the whole complex plan. In that case, this map is denoted by $z \in \C \mapsto \sigma_{z}^\varphi(x) \in M$. The $\varphi$-analytic elements form a dense $*$-algebra in $M$ \cite[Lemma $\mathrm{VIII}.2.3$]{TakesakiII}. From a given faithful semifinite normal weight $\varphi$ and its modular flow $\sigma^{\varphi} : \R \rightarrow \mathrm{Aut}(M)$, one can reconstruct $c(M)$ as the crossed product $M \rtimes_{\sigma^{\varphi}} \R$. If $\psi$ is another faithful semifinite normal weight on $M$, then the unitary $u_t=\psi^{{\rm i} t} \varphi^{-{\rm i} t}$, called the \emph{Connes-Radon-Nikodym cocycle}, satisfies $\sigma_t^\psi = \mathrm{Ad}(u_t) \circ \sigma_t^\varphi$ and $u_t$ is fixed by $(\theta_\lambda)_{\lambda \in \R^*_+}$ hence $u_t \in M$. In particular, the equivalence class $\delta(t)=[ \sigma_t^\varphi] \in \mathrm{Out}(M)$ is independent of the choice of $\varphi$ which means that we have a canonical homomorphism $\delta : \R \rightarrow \mathrm{Out}(M)$. 

\subsection{The standard form} 
 Let $M$ be a von Neumann algebra. Let $c(M)$ be the core of $M$ with its canoncal trace $\tau$. Then the set of all \emph{$\tau$-measurable} operators affiliated with $c(M)$ form a nice topological $*$-algebra obtained as the completion of $c(M)$ for the \emph{measure topology} as explained in \cite[Chapter $\mathrm{IX}.2$]{TakesakiII}. Hence we can multiply them as explained in \cite[Theorem $\mathrm{IX}.2.2$]{TakesakiII} and for this, we do not use their representation as closed densely defined operators (\cite[Definition $\mathrm{IX}.2.4$]{TakesakiII}) so there are no domain issues. By\cite{haagerup1979lp}, a semifinite normal weight $\varphi$ on $M$ is finite (i.e.\ $\varphi(1) < +\infty$) if and only if it is \emph{$\tau$-measurable} as a positive operator affiliated with $c(M)$. And more generally, the predual $M_*$ can be identified with the space $L^1(M)$ of all $\tau$-measurable operators $h$ affiliated with $c(M)$ such that $\theta_\lambda(h)=\lambda h$ for all $\lambda > 0$. For every $\varphi \in M_*=L^1(M)$ we let $\langle \varphi \rangle :=\varphi(1)$. Following \cite{haagerup1979lp}, we let $L^2(M)$ denote the space of all $\tau$-measurable operators $\xi$ affiliated with $c(M)$ such that $\theta_\lambda(\xi)=\lambda^{1/2}\xi$ for all $\lambda > 0$. Observe that if $\varphi \in M_*^+$ then $\varphi^{1/2} \in L^2(M)$. If $\xi \in L^2(M)$ and $\xi = u |\xi |$ is its polar decomposition as a $\tau$-measurable operator, then one has $u \in M$ and $|\xi | \in L^2(M)$. If $\xi, \eta \in L^2(M)$ then one has $\xi \eta \in L^1(M)=M_*$ and $\langle \xi \eta \rangle = \langle \eta \xi \rangle$. Hence, one can define an inner product on $L^2(M)$ by 
\[ \langle \xi , \eta \rangle := \langle \xi \eta^* \rangle= \langle \eta^* \xi \rangle \]
and this turns $L^2(M)$ into a Hilbert space. We have $|| \xi \eta ||_1 \leq || \xi ||_2 || \eta ||_2$ for all $\xi, \eta \in L^2(M)$. If $x \in M$ then $x \xi \in L^2(M)$ for all $\xi \in L^2(M)$ and $|| x \xi ||_2 \leq ||x ||_\infty ||\xi ||_2$. Therefore we can define a bounded operator in $B(L^2(M))$ by $\lambda(x) : \xi \mapsto x \xi$. Similarly, we define a bounded operator $\rho(x) : \xi \mapsto \xi x$. Then $\lambda : M \rightarrow B(L^2(M))$ is a faithful normal representation and $\rho : M \rightarrow B(L^2(M))$ is a faithful normal antirepresentation and we have $\lambda(M)'=\rho(M)$. We define the conjugate linear isometry $J$ on $L^2(M)$ by $J : \xi \mapsto \xi^*$ and the set of positive elements of $L^2(M)$ is denoted $L^2(M)^+$. The quadruple $(\lambda(M),L^2(M),J,L^2(M)^+)$ is the \emph{standard form} of $M$ \cite[Theorem 1.21]{haagerup1979lp}. 

Finally, to familiarize the reader with this quite unusual point of view where we see $L^{2}(M)$ and $L^{1}(M)=M_*$ as subsets of the $*$-algebra of all $\tau$-measurable operators affiliated with $c(M)$, we prove the following lemma which already appears in \cite{kosaki1984applications}[Lemma 2.3 and 2.4].

\begin{lem} \label{strongly_commute}
Let $M$ be a von Neumann algebra with two faithful normal states $\varphi$ and $\psi$. If $\varphi \psi = \psi \varphi$ (as $\tau$-measurable operators) then $\varphi$ and $\psi$ strongly commute as unbounded self-adjoint operators (i.e\ all their spectral projections commute) and in particular we have $\psi^{it}\varphi \psi^{-it}=\varphi$ for all $t \in \R$.
\end{lem}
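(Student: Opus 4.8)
The plan is to reduce the assertion to the commutation of two bounded operators by passing to resolvents, working throughout inside the $*$-algebra of $\tau$-measurable operators affiliated with $c(M)$, where multiplication is unambiguous and there are no domain issues. Since $\varphi,\psi$ are positive $\tau$-measurable operators, the elements $a:=(1+\varphi)^{-1}$ and $b:=(1+\psi)^{-1}$ are well-defined bounded positive contractions, affiliated with $c(M)$ and hence lying in $c(M)$, and they satisfy $a(1+\varphi)=(1+\varphi)a=1$ and similarly for $b$. From the hypothesis $\varphi\psi=\psi\varphi$ I would first deduce $(1+\varphi)\psi=\psi(1+\varphi)$, and then, multiplying on both sides by $a$, the identity $a\psi=\psi a$. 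Adding $a\cdot 1=1\cdot a$ gives $a(1+\psi)=(1+\psi)a$, and multiplying on both sides by $b$ yields $ab=ba$.

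Next I would invoke the spectral theorem for the commuting pair of bounded self-adjoint operators $a,b$ (realizing $c(M)$ concretely, e.g.\ on its standard Hilbert space): $a$ and $b$ generate an abelian von Neumann subalgebra of $c(M)$, so all their spectral projections commute. Since $\lambda\mapsto(1+\lambda)^{-1}$ is a homeomorphism of $[0,+\infty)$ onto $(0,1]$, and $\varphi$ is $\tau$-measurable so that $a$ has trivial kernel (equivalently $e_\varphi(\{+\infty\})=0$), the von Neumann algebra generated by the spectral projections of $\varphi$ coincides with the one generated by $a$, and likewise for $\psi$ and $b$. Hence the spectral projections of $\varphi$ and of $\psi$ commute, which is exactly the strong commutation of $\varphi$ and $\psi$ as positive self-adjoint operators. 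Finally, $\psi^{{\rm i}t}$ is a bounded Borel function of $\psi$ (the function $\lambda\mapsto\lambda^{{\rm i}t}$ being unambiguous since $\psi$ faithful forces $e_\psi(\{0\})=0$), hence lies in the von Neumann algebra generated by $\psi$; by the previous step it commutes with every spectral projection of $\varphi$, so $\psi^{{\rm i}t}\varphi\psi^{-{\rm i}t}=\varphi$ for all $t\in\R$.

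I do not expect a serious obstacle here: the statement is essentially a formal consequence of the good algebraic structure of the $\tau$-measurable operators recalled in the preliminaries. The only point that requires care is to justify each manipulation inside that algebra — in particular that $1+\varphi$ and $1+\psi$ are genuinely invertible there with inverses $a,b\in c(M)$, and that the resolvent identities $a\psi=\psi a$ and $ab=ba$ are legitimate equalities of $\tau$-measurable operators rather than mere equalities on a common core — so that the passage from $\varphi\psi=\psi\varphi$ to commuting spectral projections is rigorous.
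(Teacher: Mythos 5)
Your argument is correct and follows essentially the same route as the paper: pass to the bounded resolvents $(1+\varphi)^{-1}$ and $(1+\psi)^{-1}$ inside the $*$-algebra of $\tau$-measurable operators, observe they commute, and transfer the commutation back to the spectral projections of $\varphi$ and $\psi$ via the bijective functional calculus $\lambda\mapsto(1+\lambda)^{-1}$. The paper reaches $ab=ba$ slightly more directly, by expanding $(1+\varphi)(1+\psi)=(1+\psi)(1+\varphi)$ and multiplying once on each side by the inverses, but this is the same computation as yours.
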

\begin{proof}
Since $\varphi$ and $\psi$ are positive, $(1+\varphi)$ and $(1+\psi)$ have two bounded inverses $(1+\varphi)^{-1}$ and $(1+\psi)^{-1}$ in $c(M)$. Since $\varphi\psi = \psi \varphi$ we have $(1+\varphi)(1+\psi)=(1+\psi)(1+\varphi)$ and by multipliying by $(1+\psi)^{-1}$ and $(1+\varphi)^{-1}$ (no domains involved here, we just use the $*$-algebra structure as defined in \cite[Theorem $\mathrm{IX}.2.2$]{TakesakiII}), we get $(1+\varphi)^{-1}(1+\psi)^{-1}=(1+\psi)^{-1}(1+\varphi)^{-1}$. Since this two self-adjoint bounded operators commute, their spectral projections also commute. But the spectral projections of $(1+\varphi)^{-1}$ (resp. $(1+\psi)^{-1}$) are the same as the spectral projections of $\varphi$ (resp. $\psi$) and this proves the lemma.
\end{proof}

\subsection{Ultraproducts}
In this section, we recall the definitions of ultraproducts of von Neumann algebras and we fix the notations. We refer to \cite{AndoHaagUltra} for the proofs. For technical reasons, we need to use ultrafilters on arbitrary directed sets and not only on $\N$ (even if we assume that $M$ has separable predual in Theorem \ref{main}). The proofs of the few properties that we will need from \cite{AndoHaagUltra} work verbatim in this more general setting. 

Let $I$ be a directed set and $\omega$ a cofinal ultrafilter on $I$ (cofinal means that $\{ j \in I \mid j \geq i \} \in \omega$ for every $i \in I$). Let $(E,||\cdot ||)$ be a Banach space. We define a new Banach space $(E, || \cdot ||)^\omega$ called the \emph{ultraproduct} of $(E, || \cdot ||)$ with respect to $\omega$. It is the quotient of $\ell^\infty(I,E)$ by the closed subspace 
\[ \{ (x_i)_{i \in I} \in \ell^\infty(I,E) \mid \lim_{i \rightarrow \omega } ||x_i ||=0 \} \]
If $(x_i)_{ i \in I} \in \ell^{\infty}(I,E)$ we denote by $(x_i)^\omega$ its class in $(E,||\cdot ||)^\omega$. If $x_i:=x$ is a constant net, we denote its class by $x^\omega$. The norm on $(E, || \cdot ||)^\omega$ is defined by
\[ ||(x_i)^\omega || = \lim_{i \rightarrow \omega} ||x_i || \]
When $H$ is a Hilbert space, the Banach space ultraproduct $H^\omega$ is also a Hilbert space with the following scalar product
\[ \langle (\xi_i)^\omega, (\eta_i)^\omega \rangle = \lim_{i \rightarrow \omega } \langle \xi_i , \eta_i \rangle \]
One can view $(B(H),|| \cdot ||)^\omega$ naturally as a $C^*$-subalgebra of $B(H^\omega)$ via the $*$-homomorphism
\[ (T_i)^\omega \mapsto \left( (\xi_i)^\omega \mapsto (T_i\xi_i)^\omega \right) \]
but we warn the reader that the $*$-homomorphism $T \in B(H) \mapsto T^\omega \in B(H^\omega)$ is not normal in general.

Now, fix $M$ a $\sigma$-finite von Neumann algebra. Let $A=(M,|| \cdot ||)^\omega$ be the Banach space ulraproduct of $M$ with respect to $\omega$. Then $A$ is naturally a $C^*$-algebra but it is not a von Neumann algebra in general. Let $A^{**}$ be the bidual of $A$ (which is a von Neumann algebra). Let $(M_*)^\omega$ be the Banach space ultraproduct of $M_*$. Then $(M_*)^\omega$ can be identified naturally with a closed subspace of $A^*$ via the embedding
\[ (\varphi_i)^\omega \mapsto \left( (x_i)^\omega \mapsto \lim_{i \rightarrow \omega} \varphi_i(x_i) \right) \]
Then the orthogonal of $(M_*)^\omega$ in $A^{**}$ defined by
\[ \mathfrak{J}=\{ x \in A^{**} \mid \forall \varphi \in (M_*)^\omega, \; \varphi(x)=0 \} \]
is a weak* closed ideal in the von Neumann algebra $A^{**}$ which means that the quotient $M^\omega_{GR} := A^{**}/\mathfrak{J}$ is a von Neumann algebra. It is called the \emph{Groh-Raynaud ultraproduct} of $M$ (with respect to $\omega$). By construction, the predual of $M^\omega_{GR}$ is exactly $(M_*)^\omega$ and $M^\omega_{GR}$ contains the Banach space ultraproduct $A=(M,|| \cdot ||)^\omega$ as a dense $C^*$-subalgebra. The $*$-homomorphism $x \in M \mapsto x^\omega \in M^\omega_{GR}$ is not normal in general so $M$ is not a von Neumann subalgebra of $M^\omega_{GR}$. The von Neumann algebra $M^\omega_{GR}$ is very large (not separable and not even $\sigma$-finite in general). The main interest in this ultraproduct comes from the fact that, as explained in \cite{AndoHaagUltra} and \cite{raynaud2002ultrapowers}, there is a natural identification $L^2(M_{GR}^\omega)=L^2(M)^\omega$ in such a way that $\lambda((x_i)^\omega)$ is identified with $(\lambda(x_i))^\omega$ and $\rho((x_i)^\omega)$ is identified with $(\rho(x_i))^\omega$ for all $(x_i)^\omega \in (M, ||\cdot ||)^\omega$. The positive cone $L^2(M_{GR}^\omega)^+$ is identified with $\{ (\xi_i)^\omega \mid \xi_i \in L^2(M)^+ \}$ and the conjugation is given by $((\xi_i)^\omega)^*=(\xi_i^*)^\omega$.

Now choose a faithful normal state $\varphi \in M_*^+$. Then we have $\varphi^\omega \in (M^\omega_{GR})_*^+$ but $\varphi^{\omega}$ is not faithful in general. Let $e$ be the support of $\varphi^{\omega}$ in $M^\omega_{GR}$. The projection $e$ does not depend on the choice of $\varphi$ and the corner $e(M^\omega_{GR})e$ is called the \emph{Ocneanu ultraproduct} of $M$ (relatively to $\omega$). It is denoted by $M^{\omega}$. For all $x \in M$ we have $x^{\omega}e=ex^{\omega}$ and the $*$-homomorphism $x \in M \mapsto x^{\omega}e \in M^{\omega}$ is normal. Hence we may sometimes abuse the notation and view $M$ as a von Neumann subalgebra of $M^{\omega}$. In this case, one can define a canonical faithful normal conditional expectation $E^{\omega} : M^{\omega} \rightarrow M$ by the formula 
\[ E^{\omega}((x_i)^{\omega})=\lim_{i \rightarrow \omega} x_i  \; \text{ in the weak* topology} \]
Finally, for the Ocneanu ultraproduct we have $L^2(M^\omega)=e(L^2(M)^\omega) e$.

\subsection{Full factors}
Let $M$ be a factor. A \emph{centralizing net} in $M$ is a bounded net $x_i \in M, i \in I$ such that $||x_i \varphi - \varphi x_i|| \rightarrow 0$ for all $\varphi \in M_*$. We say that $(x_i)_{i \in I}$ is \emph{trivial} if there exists a net $\lambda_i \in \C, i \in I$ such that $x_i - \lambda_i \rightarrow 0$ in the strong topology. We say that the factor $M$ is \emph{full} if every centralizing net is trivial. It is not hard to check (see \cite{ConnesAlmostPeriodic}) that $M$ is full if and only if the map $\mathrm{Ad}: \mathcal{U}(M) \rightarrow \mathrm{Aut}(M)$ is open on its range. This means that if some net of unitaries $u_i \in \mathcal{U}(M), i \in I$ satisfies $\mathrm{Ad}(u_i) \rightarrow 1$ then there exists a net of scalars of modulus one $z_i \in \mathbb{U}=\ker \mathrm{Ad}$ such that $z_iu_i \rightarrow 1$. In particular, as it is shown in \cite{ConnesAlmostPeriodic}, if $M$ is full then $\mathrm{Inn}(M)$ is closed in $\mathrm{Aut}(M)$ (the converse is also true when $M$ has separable predual but is probably not true in general). 

Now, suppose that $M$ is $\sigma$-finite.  For a cofinal ultrafilter $\omega$ on some directed set $I$, consider the von Neumann algebra $M' \cap M^\omega$. Since $M$ is a factor, the faithful normal conditional expectation $E^{\omega} : M^{\omega} \rightarrow M$ restricts to a faithful normal state $\Psi  : M' \cap M^{\omega} \rightarrow \C$ called the \emph{Golodets state} (see \cite{AndoHaagUltra}). Since $\varphi^{\omega}=\varphi \circ E^{\omega}$, we have $\Psi=\varphi^{\omega}_{\mid M' \cap M^{\omega}}$ for any normal state $\varphi \in M_*^{+}$. The centralizer of $\Psi$ in $M' \cap M^{\omega}$ is denoted by $M_\omega$ and is called the \emph{asymptotic centralizer} of $M$ (relatively to $\omega$). If $x \in (M, || \cdot ||)^{\omega}$ satisfies $x \varphi^{\omega}=\varphi^{\omega}x$ for all $\varphi \in M_*$ then $xe=ex \in M_\omega$ and all elements of $M_\omega$ are of this form. It is easy to check (see \cite{ConnesAlmostPeriodic}) that $M$ is full if and only if $M_\omega = \C$ for every cofinal ultrafilter $\omega$ on every directed set $I$. Note that when $M$ has separable predual and $\omega$ is an ultrafilter on $I=\N$ then we have $M_\omega= \C$ if and only if $M' \cap M^{\omega}=\C$ by \cite{AndoHaagUltra}[Theorem 5.2].

\section{Skew information}
In this section we recall some of the techniques that were developped in \cite{connes1976classification}, \cite{connes_stormer} and \cite{connes1985factors}. They involve a subtle analysis of a functional, called the \emph{skew information}, which is defined by
\[ I(x,\xi)=\frac{1}{2}|| x \xi - \xi x||^2 \]
for $x$ in a von Neumann algebra $M$ and $\xi \in L^2(M)^+$.

\begin{prop} \label{skew}
We have the following properties:
\begin{enumerate}
\item $|| x \varphi-\varphi x ||^{2} \leq 8 \varphi(1)I(x,\varphi^{1/2})$ for all $x \in M$ and $\varphi \in M_{*}^{+}$.
\item $I(|x|,\xi) \leq I(x,\xi)$ for all $x \in M$ with $x^{*}x=xx^{*}$ and all $\xi \in L^2(M)^+$.
\item $I(|x|,\xi) + I(|x^*|,\eta)  \leq || x \xi-\eta x||^2$ for all $x \in M$, $\xi, \eta \in L^2(M)^+$.
\item $I(p,\xi)=||p\xi p^{\perp}||^2$ for all $p \in \mathcal{P}(M)$ and $\xi \in L^2(M)^+$.
\item $I(p+q,\xi) = I(p,q^{\perp} \xi q^{\perp})+I(q,p^{\perp}\xi p^{\perp})$ for all $p, q \in \mathcal{P}(M)$ with $pq=0$ and all $\xi \in L^2(M)^+$.
\end{enumerate}
\end{prop}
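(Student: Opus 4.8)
The plan is to establish the five items in the order $(1)$, $(3)$, $(2)$, $(4)$, $(5)$, since $(2)$ drops out of $(3)$ and $(5)$ out of $(4)$. All computations take place inside the $*$-algebra of $\tau$-measurable operators affiliated with $c(M)$, and I will use repeatedly that $L^2(M)$ is an $M$-bimodule, that $\langle \zeta\eta\rangle = \langle\eta\zeta\rangle$ for $\zeta,\eta\in L^2(M)$ (and $\langle mh\rangle=\langle hm\rangle$ for $m\in M,\ h\in L^1(M)$), that $\|\zeta\eta\|_1\le\|\zeta\|_2\|\eta\|_2$, that $J\colon\zeta\mapsto\zeta^*$ is an isometry of $L^2(M)$, and that $\|u\zeta u^*\|_2\le\|\zeta\|_2$ when $u$ is a partial isometry in $M$.

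For $(1)$, write $\varphi=\varphi^{1/2}\varphi^{1/2}$ and expand $x\varphi-\varphi x=(x\varphi^{1/2}-\varphi^{1/2}x)\varphi^{1/2}+\varphi^{1/2}(x\varphi^{1/2}-\varphi^{1/2}x)$. The triangle inequality, the H\"older bound $\|\zeta\eta\|_1\le\|\zeta\|_2\|\eta\|_2$ and $\|\varphi^{1/2}\|_2^2=\langle\varphi\rangle=\varphi(1)$ give $\|x\varphi-\varphi x\|\le 2\varphi(1)^{1/2}\|x\varphi^{1/2}-\varphi^{1/2}x\|$; squaring and using $\|x\varphi^{1/2}-\varphi^{1/2}x\|^2=2I(x,\varphi^{1/2})$ yields the constant $8$.

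The main step is $(3)$. Expanding $\|x\xi-\eta x\|^2=\|x\xi\|^2+\|\eta x\|^2-2\operatorname{Re}\langle x\xi,\eta x\rangle$ and applying the trace identity one finds $\|x\xi\|^2=\langle|x|^2\xi^2\rangle$, $\|\eta x\|^2=\langle|x^*|^2\eta^2\rangle$ and $\langle x\xi,\eta x\rangle=\langle x\xi x^*\eta\rangle$, the last quantity being real by cyclicity; doing the same for $\||x|\xi-\xi|x|\|^2$ and $\||x^*|\eta-\eta|x^*|\|^2$, the inequality $(3)$ becomes equivalent to
\[ 2\langle x\xi x^*\eta\rangle\le \langle|x|\xi|x|\xi\rangle+\langle|x^*|\eta|x^*|\eta\rangle. \]
To prove this, take the polar decomposition $x=u|x|$, note $|x^*|^{1/2}=u|x|^{1/2}u^*$ (hence $u|x|^{1/2}=|x^*|^{1/2}u$), and introduce the positive operators $T,S\in B(L^2(M))$ with $T\zeta=|x|^{1/2}\zeta|x|^{1/2}$ and $S\zeta=|x^*|^{1/2}\zeta|x^*|^{1/2}$. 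Then $\langle|x|\xi|x|\xi\rangle=\|T\xi\|^2$, $\langle|x^*|\eta|x^*|\eta\rangle=\|S\eta\|^2$, and $x\xi x^*=u|x|^{1/2}(T\xi)|x|^{1/2}u^*=S\bigl(u(T\xi)u^*\bigr)$, so $\langle x\xi x^*\eta\rangle=\langle u(T\xi)u^*,S\eta\rangle$. Cauchy--Schwarz, the contraction $\|u(T\xi)u^*\|_2\le\|T\xi\|_2$, and $2ab\le a^2+b^2$ finish the argument. I expect the only delicate point to be the support-projection bookkeeping behind $|x^*|^{1/2}=u|x|^{1/2}u^*$ and the identities $u^*u\,|x|^{1/2}=|x|^{1/2}$ that justify the manipulations above.

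Then $(2)$ is immediate: apply $(3)$ with $\eta=\xi$ and use $|x|=|x^*|$ when $x^*x=xx^*$ to get $2I(|x|,\xi)\le\|x\xi-\xi x\|^2=2I(x,\xi)$. For $(4)$, put $A=p\xi p^\perp$; since $\xi=\xi^*$ we have $p\xi-\xi p=A-A^*$, so $\|p\xi-\xi p\|^2=\|A\|^2+\|A^*\|^2-2\operatorname{Re}\langle A,A^*\rangle$, where $\langle A,A^*\rangle=\langle A^2\rangle=0$ because $p^\perp p=0$ and $\|A^*\|_2=\|JA\|_2=\|A\|_2$; hence $I(p,\xi)=\|A\|^2=\|p\xi p^\perp\|^2$. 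Finally, for $(5)$ note that $p+q$ and $1-p-q$ are projections (as $pq=qp=0$) and apply $(4)$ three times: using $pq^\perp=p$, $q^\perp p^\perp=1-p-q$, $qp^\perp=q$, $p^\perp q^\perp=1-p-q$, the three skew informations equal $\|B\|^2$, $\|C\|^2$ and $\|B+C\|^2$ with $B=p\xi(1-p-q)$ and $C=q\xi(1-p-q)$, while the cross term $\langle B,C\rangle=\langle p\xi(1-p-q)\xi q\rangle$ vanishes by cyclicity since $qp=0$, giving the claimed additivity.
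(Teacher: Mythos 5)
Your proofs of $(1)$, $(4)$ and $(5)$ coincide with the paper's (same decompositions, same orthogonality/cyclicity arguments), but your treatment of $(2)$ and $(3)$ follows a genuinely different route and is correct. The paper proves $(2)$ first by a structural argument: for normal $x$ the operators $\lambda(x)$ and $\rho(x)$ generate a commutative von Neumann algebra in $B(L^2(M))$, so the scalar inequality $\bigl||a|-|b|\bigr|^2\le|a-b|^2$ lifts to $\bigl||\lambda(x)|-|\rho(x)|\bigr|^2\le|\lambda(x)-\rho(x)|^2$, and evaluating at $\xi$ gives $(2)$; then $(3)$ is obtained from $(2)$ by passing to the self-adjoint element $y=\begin{pmatrix}0&x^*\\ x&0\end{pmatrix}$ and the positive vector $\alpha=\begin{pmatrix}\xi&0\\ 0&\eta\end{pmatrix}$ in $M_2(M)$, computing $I(y,\alpha)$ and $I(|y|,\alpha)$ entrywise. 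You instead prove $(3)$ directly: you reduce it (correctly) to $2\langle x\xi x^*\eta\rangle\le\langle|x|\xi|x|\xi\rangle+\langle|x^*|\eta|x^*|\eta\rangle$, and establish this with polar decomposition, the self-adjoint contractions $T,S$, the identity $u|x|^{1/2}=|x^*|^{1/2}u$, Cauchy--Schwarz, and AM--GM; then $(2)$ drops out by taking $\eta=\xi$ and using $|x|=|x^*|$ for normal $x$. Both arguments are sound. The paper's route is shorter once one accepts the operator-function inequality and is the standard Connes--St\o rmer device; yours is more elementary and self-contained (no matrix amplification, no spectral-theoretic inequality for commuting normals), at the cost of a more explicit computation. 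The "delicate points" you flagged ($|x^*|^{1/2}=u|x|^{1/2}u^*$, $u^*u\,|x|^{1/2}=|x|^{1/2}$, and $\|u\zeta u^*\|_2\le\|\zeta\|_2$) are all standard and hold as you use them, so there is no gap.
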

\begin{proof}
$(1)$ We have $|| x \varphi - \varphi x || \leq || x\varphi - \varphi^{1/2}x\varphi^{1/2}|| + || \varphi^{1/2}x\varphi^{1/2} - \varphi x ||$. Thus
\[ || x \varphi - \varphi x ||  \leq   || x \varphi^{1/2}-\varphi^{1/2} x || \cdot || \varphi^{1/2} || + || \varphi^{1/2} || \cdot || x \varphi^{1/2}-\varphi^{1/2} x || \]
Since $|| \varphi^{1/2} || =\varphi(1)^{1/2}$. We get $|| x \varphi - \varphi x ||  \leq 2 \varphi(1)^{1/2} || x \varphi^{1/2}-\varphi^{1/2} x ||$ as we wanted.

$(2)$. Since $x$ is normal, the operators $\lambda(x)$ and $\rho(x)$ generate a commutative von Neumann algebra in $B(L^{2}(M))$. Hence by the classical triangle inequality we have $| \, |\lambda(x)|-|\rho(x)| \,|^{2} \leq |\lambda(x)-\rho(x)|^{2}$. Hence by applying the positive linear form $\langle \cdot \xi, \xi \rangle$ we get $|| (|\lambda(x)|-|\rho(x)|)\xi ||^{2} \leq || (\lambda(x)-\rho(x))\xi ||^{2}$ which means that $|||x|\xi- \xi |x| ||^{2} \leq || x \xi - \xi x ||^{2}$.

$(3)$. For $N=M_2(\C) \otimes M=M_2(M)$ consider
\[ y=
\begin{pmatrix} 
0 & x^* \\
x & 0
\end{pmatrix} \in N \text{ and } \alpha= 
\begin{pmatrix}
\xi & 0 \\
0 & \eta 
\end{pmatrix} \in L^2(N)^+
\]
Then we have 
\[ y\alpha-\alpha y = \begin{pmatrix}
0 & x^*\eta-\xi x^* \\
x\xi-\eta x & 0
\end{pmatrix}
\]
so that $I(y,\alpha)=||x \xi- \eta x||^2$. We also have
\[ |y|=\begin{pmatrix}
|x| & 0 \\
0 & |x^*|
\end{pmatrix}\]
which means that
\[ |y|\alpha-\alpha |y| = \begin{pmatrix}
|x| \xi - \xi |x| & 0 \\
0 & |x^*|\eta - \eta |x^*|
\end{pmatrix}
 \]
so that $I(|y|,\alpha) = I(|x|,\xi)+I(|x^*|,\eta)$. As $y$ is self-adjoint, the conclusion follows from $(2)$.

$(4)$. We have $||p\xi-\xi p||^2=||p\xi p^{\perp}-p^{\perp}\xi p||^2=||p\xi p^{\perp}||^2+||p^{\perp}\xi p||^2=2||p\xi p^{\perp}||^2$ because $p\xi p^{\perp}$ and $p^{\perp}\xi p$ are orthogonal.

$(5)$. By $(4)$, we have $I(p+q,\xi)=||(p+q)\xi p^{\perp}q^{\perp}||^2=||p\xi p^{\perp}q^{\perp}||^2+||q\xi p^{\perp}q^{\perp}||^2=||pq^{\perp}\xi q^{\perp}p^{\perp}||^2+||qp^{\perp}\xi p^{\perp}q^{\perp}||^2=I(p,q^{\perp} \xi q^{\perp})+I(q,p^{\perp}\xi p^{\perp})$.
\end{proof}

The following crucial lemma was obtained in \cite[Corollary 3]{connes_stormer} but the main idea behind it already appears in \cite{connes1976classification}. See also \cite[Corollary $\mathrm{IX}.1.23$]{TakesakiII}.

\begin{lem} \label{spike}
Let $M$ be a von Neumann algebra and let $\xi \in L^2(M)^+$. For every $\varepsilon > 0$ and every non-zero $x \in M^{+}$ which satisfies $I(x,\xi) \leq \varepsilon ||x\xi||^2$ there exists $c > 0$ such that $p=1_{[c,+\infty)}(x) \neq 0$ and $I(p,\xi) \leq 4\sqrt{\varepsilon} ||p\xi ||^2$.
\end{lem}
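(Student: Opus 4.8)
The plan is to reduce the statement to a one‑variable inequality about the spectral distribution of $x$ ``as seen by $\xi$'', prove that inequality by Cauchy--Schwarz, and then extract the projection $p_c$ by an averaging argument. For the reduction, let $E$ be the joint spectral measure on $[0,\infty)^2$ of the commuting pair of positive bounded operators $\lambda(x),\rho(x)\in B(L^2(M))$, and set $\mu:=\langle E(\cdot)\xi,\xi\rangle$, a finite positive Borel measure on $[0,\infty)^2$; write $s,t$ for the two coordinate functions. Since $x=x^*$ one has
\[
\|x\xi\|^2=\langle\lambda(x)^2\xi,\xi\rangle=\int s^2\,d\mu,\qquad 2I(x,\xi)=\|x\xi-\xi x\|^2=\langle(\lambda(x)-\rho(x))^2\xi,\xi\rangle=\int(s-t)^2\,d\mu,
\]
so the hypothesis becomes $\int(s-t)^2\,d\mu\le 2\varepsilon\int s^2\,d\mu$. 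For $c>0$ put $p_c:=1_{[c,\infty)}(x)\in\mathcal P(M)$; then $\lambda(p_c)\rho(p_c^\perp)=E(\{s\ge c\}\cap\{t<c\})$, and hence, using Proposition~\ref{skew}(4),
\[
\|p_c\xi\|^2=\mu(\{s\ge c\})=:f(c),\qquad I(p_c,\xi)=\|p_c\xi p_c^\perp\|^2=\mu(\{s\ge c\}\cap\{t<c\})=:g(c).
\]
It therefore suffices to find $c>0$ with $f(c)>0$ (which forces $p_c\ne0$, as $\lambda$ is faithful) and $g(c)\le 4\sqrt\varepsilon\,f(c)$.

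Two degenerate regimes are disposed of at once. If $4\sqrt\varepsilon\ge1$, i.e.\ $\varepsilon\ge1/16$, then $\|p_c\xi p_c^\perp\|\le\|p_c\xi\|$ gives $I(p_c,\xi)\le\|p_c\xi\|^2\le 4\sqrt\varepsilon\,\|p_c\xi\|^2$ for \emph{every} $c$, and since $x\ne0$ some $p_c\ne0$, so we are done. If $x\xi=0$, then $p_c\xi=0$ for all $c>0$ and $I(p_c,\xi)=0$, so again any $c$ with $p_c\ne0$ works. Assume henceforth $\varepsilon<1/16$ and $x\xi\ne0$. A Fubini--Tonelli (layer‑cake) computation, using $s,t\ge0$, gives
\[
\int_0^\infty 2c\,f(c)\,dc=\int s^2\,d\mu=\|x\xi\|^2,\qquad\int_0^\infty 2c\,g(c)\,dc=\int(s^2-t^2)_+\,d\mu,
\]
where $a_+:=\max(a,0)$.

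The core estimate is the following. Since $s,t\ge0$ we have the pointwise identity $(s^2-t^2)_+=(s-t)_+(s+t)$, so by Cauchy--Schwarz in $L^2(\mu)$, together with $\|(s-t)_+\|_{L^2(\mu)}\le\|s-t\|_{L^2(\mu)}\le\sqrt{2\varepsilon}\,\|s\|_{L^2(\mu)}$ and $\|s+t\|_{L^2(\mu)}=\|2s-(s-t)\|_{L^2(\mu)}\le(2+\sqrt{2\varepsilon})\,\|s\|_{L^2(\mu)}$,
\[
\int(s^2-t^2)_+\,d\mu\le\sqrt{2\varepsilon}\,(2+\sqrt{2\varepsilon})\int s^2\,d\mu\le 4\sqrt\varepsilon\int s^2\,d\mu,
\]
the last step because $\varepsilon<1/16$ forces $2\sqrt{2\varepsilon}+2\varepsilon\le4\sqrt\varepsilon$, i.e.\ $\sqrt\varepsilon\le2-\sqrt2$. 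In terms of $f$ and $g$ this reads $\int_0^\infty 2c\,\bigl(g(c)-4\sqrt\varepsilon\,f(c)\bigr)\,dc\le0$. Since $x\xi\ne0$ and $f$ is non‑increasing, $f>0$ on some nonempty interval $(0,c_*)$ with $c_*\le\|x\|$, while $0\le g\le f=0$ on $[c_*,\infty)$; hence $g(c)-4\sqrt\varepsilon f(c)$ cannot be strictly positive for a.e.\ $c\in(0,c_*)$, and any $c$ in that interval with $g(c)\le4\sqrt\varepsilon f(c)$ satisfies $p_c\ne0$ and $I(p_c,\xi)=g(c)\le4\sqrt\varepsilon\,\|p_c\xi\|^2$, completing the proof.

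I expect the delicate point to be calibrating the Cauchy--Schwarz step so that the resulting constant is genuinely independent of $\|x\|$ and $\|\xi\|$: a crude bound such as $\|s+t\|_{L^2(\mu)}\le 2\|x\|\,\mu([0,\infty)^2)^{1/2}$ is useless, so one is forced into the factorization $(s^2-t^2)_+=(s-t)_+(s+t)$ and into re‑estimating $\|s+t\|_{L^2(\mu)}$ by $\|s\|_{L^2(\mu)}$ through the hypothesis; this, combined with the trivial bound $I(p,\xi)\le\|p\xi\|^2$ in the range $\varepsilon\ge1/16$, is precisely what makes the constant $4$ work. A second, minor, subtlety is that $\mu$ only records the part of the spectrum of $x$ visible to $\xi$, so the averaging argument must be run on the interval $(0,c_*)$ on which $f>0$, to guarantee $p_c\ne0$.
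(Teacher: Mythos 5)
The paper does not prove Lemma~\ref{spike}: it is cited from \cite[Corollary 3]{connes_stormer} (see also \cite[Corollary $\mathrm{IX}.1.23$]{TakesakiII}), so there is no in-paper argument to compare against. Your proof is correct and is, in spirit, the classical spectral-averaging argument behind that result. The identifications $\|p_c\xi\|^2=\mu(\{s\ge c\})=:f(c)$ and, via Proposition~\ref{skew}(4), $I(p_c,\xi)=\|p_c\xi p_c^\perp\|^2=\mu(\{s\ge c\}\cap\{t<c\})=:g(c)$ are valid because $\lambda(x)$ and $\rho(x)$ commute and $\lambda(1_{[c,\infty)}(x))=1_{[c,\infty)}(\lambda(x))$; the two layer-cake identities follow from Fubini; the factorization $(s^2-t^2)_+=(s-t)_+(s+t)$ combined with Cauchy--Schwarz, $\|(s-t)_+\|_{L^2(\mu)}\le\sqrt{2\varepsilon}\,\|s\|_{L^2(\mu)}$ and $\|s+t\|_{L^2(\mu)}\le(2+\sqrt{2\varepsilon})\,\|s\|_{L^2(\mu)}$, gives $\int(s^2-t^2)_+\,d\mu\le\sqrt{2\varepsilon}(2+\sqrt{2\varepsilon})\int s^2\,d\mu$, and the inequality $\sqrt{2\varepsilon}(2+\sqrt{2\varepsilon})\le 4\sqrt\varepsilon$ is equivalent to $\sqrt\varepsilon\le 2-\sqrt2$, which your cut-off $\varepsilon<1/16$ comfortably ensures, while the complementary regime $4\sqrt\varepsilon\ge1$ is trivial since $I(p,\xi)\le\|p\xi\|^2$ for every projection. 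The final averaging step is also sound: $g$ vanishes wherever $f$ does, and $c_*:=\sup\{c:f(c)>0\}>0$ because $x\xi\ne0$, so $\int_0^{c_*}2c\,(g-4\sqrt\varepsilon f)\,dc\le 0$ forces $g(c)\le 4\sqrt\varepsilon f(c)$ on a positive-measure subset of $(0,c_*)$, on which $p_c\ne 0$ automatically (the appeal to faithfulness of $\lambda$ is superfluous: $f(c)=\|p_c\xi\|^2>0$ already gives $p_c\ne 0$). The only cosmetic infelicity is that the threshold $1/16$ is not optimal, but it is entirely sufficient.
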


Finally, we mention a technique that will allow us to apply Proposition \ref{skew} and Lemma \ref{spike} to finite family of vectors simultaneously. Let $\xi_1, \dots, \xi_n \in L^2(M)^+$ be a finite family of vectors. Let $M^{(n)}=\bigoplus_{k=1}^n M$. Define $\xi=(\xi_1,\dots, \xi_n) \in L^2(M^{(n)})^+$ and view $M$ as a von Neumann subalgebra of $M^{(n)}$ via the diagonal inclusion $x \mapsto (x,x,\dots,x)$. Then we have 
\[ I(x,\xi)=\sum_{k=1}^n I(x,\xi_k)\]
 If we let 
\[ \varphi=\xi_1^2+\dots + \xi_n^2 \in M_*^+ \]
then we have 
 \[ \varphi(x)=\sum_{k=1}^n \langle x \xi_k, \xi_k \rangle = \langle x \xi, \xi \rangle \]
 and 
\[ ||x||_\varphi^2=\sum_{k=1}^n ||x \xi_k||^2=|| x \xi ||^2 \]
and we will say that $(\xi_1,\dots,\xi_n)$ is a \emph{partition} of $\varphi$.

In relation with this remark, we note the following useful fact: the condition $|\eta|^2 \leq \psi$ for $\eta \in L^2(M)$ and $\psi \in M_*^+$ is equivalent to the existence of $a \in M$ with $||a|| \leq 1$ such that $\eta=a \psi^{1/2}$ (indeed $\eta \psi^{-1/2}$ defines a bounded operator in $c(M)$ which is invariant by $(\theta_\lambda)_{\lambda \in \R^*_+}$). In particular, we have $M \psi^{1/2}=\{ \eta \in L^2(M) \mid \exists \lambda > 0, \; | \eta |^2 \leq \lambda \psi\}$ and $\psi^{1/2}M \cap M \psi^{1/2}$ is the linear span of $\{ \eta \in L^2(M)^+ \mid \eta^2 \leq \psi \}$.

\section{Spectral gap characterization of fullness}

In this section we first give a different proof of the spectral gap theorem in the $\mathrm{II}_1$ case using the Groh-Raynaud ultraproduct and then we adapt it to the type $\mathrm{III}$ case in order to prove Theorem \ref{main}. 

\begin{theo}[{\cite[Theorem 2.1]{connes1976classification}}] \label{gap_connes}
Let $M$ be a full $\mathrm{II}_1$ factor. Then there exist a family $a_1,\dots, a_n \in M$ such that for all $x \in M$ we have
\[ || x- \tau(x) ||^{2}_2 \leq \sum_k || xa_k-a_k x||^{2}_2 \]
\end{theo}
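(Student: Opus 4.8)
The plan is to argue by contradiction using the Groh-Raynaud ultraproduct, exploiting the fact that a full $\mathrm{II}_1$ factor has trivial asymptotic centralizer $M_\omega = \C$. Suppose the conclusion fails. Then for every finite family $a_1,\dots,a_n \in M$ the constant $1$ does not work; iterating over a countable $\|\cdot\|_2$-dense sequence $(a_k)_{k\ge 1}$ in the unit ball of $M$ and scaling, one produces, for each $n$, an element which is far from its trace but almost commutes (in $\|\cdot\|_2$) with $a_1,\dots,a_n$. Passing to a cofinal ultrafilter $\omega$ on a suitable directed set, this yields a self-adjoint element $X \in M'\cap M^\omega$ (or in $M^\omega_{GR}$) with $\tau^\omega(X)$ controlled but $\|X - \tau^\omega(X)\|_2$ bounded below, contradicting $M_\omega = \C$. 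The point where the Groh-Raynaud ultraproduct replaces Connes's singular states is exactly in organizing this limiting procedure: since in the $\mathrm{II}_1$ case the trace is already a fixed normal trace on $M^\omega_{GR}$ and $L^2(M^\omega_{GR}) = L^2(M)^\omega$, the $\|\cdot\|_2$-estimates pass directly to the ultraproduct.

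More precisely, first I would reformulate the desired inequality in terms of the skew information: writing $a_k = \Re(b_k) + i\,\Im(b_k)$ with self-adjoint parts, and using Proposition \ref{skew}(3) together with $\xi = \tau^{1/2} \in L^2(M)^+$, the quantity $\sum_k \|x a_k - a_k x\|_2^2$ controls (up to a universal constant and passing to $|x-\tau(x)|$) a sum $\sum_k I(\,\cdot\,, \tau^{1/2})$-type expression. The strategy is then: if no finite family with constant $1$ works, then (after absorbing the universal constant and replacing $x$ by a suitable positive element via Proposition \ref{skew}(2)) for every $n$ there is a non-zero positive $x_n \in M$ with $\tau(x_n)$ small relative to $\|x_n\|_2$ but $I(x_n, \tau^{1/2}) \le \varepsilon_n \|x_n\|_2^2$ with $\varepsilon_n \to 0$. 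Here I would invoke Lemma \ref{spike}: it upgrades this to a sequence of non-zero projections $p_n = 1_{[c_n,\infty)}(x_n)$ with $I(p_n,\tau^{1/2}) \le 4\sqrt{\varepsilon_n}\,\|p_n\tau^{1/2}\|_2^2 = 4\sqrt{\varepsilon_n}\,\tau(p_n)$, i.e.\ $\|p_n \tau^{1/2} p_n^\perp\|_2^2$ is small compared to $\tau(p_n)$. One still needs $\tau(p_n)$ bounded away from both $0$ and $1$; a cutting argument (halving a projection that is too large, or the standard trick of replacing $p_n$ by a subprojection of trace close to $1/2$ using that $M$ is $\mathrm{II}_1$) handles this, so one may assume $\tau(p_n) \to t \in (0,1)$.

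Now I would assemble the $p_n$ into the ultraproduct. Choosing $\omega$ a cofinal ultrafilter on $\N$ and setting $P = (p_n)^\omega$, the estimate $\|p_n \tau^{1/2} p_n^\perp\|_2 \to 0$ together with $L^2(M^\omega_{GR}) = L^2(M)^\omega$ and the identification of $\lambda,\rho$ with their ultraproducts gives $I(P, (\tau^{1/2})^\omega) = 0$, i.e.\ $P$ commutes with $(\tau^{1/2})^\omega$. Translating back through the argument (and the fact that the $a_k$ are $\|\cdot\|_2$-dense, so almost-commutation with all $a_n$ forces commutation with all of $M$ in the ultraproduct), $P$ is a projection in $M'\cap M^\omega$ with $\Psi(P) = \tau^\omega(P) = t \in (0,1)$, hence $P \notin \C$. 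Since $\tau$ is a trace, $M'\cap M^\omega = M_\omega$, so $M_\omega \ne \C$, contradicting fullness of $M$.

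The main obstacle I anticipate is the bookkeeping in the reduction to the skew-information/projection form: one must be careful that the failure of the inequality for \emph{all} finite families (with constant $1$) really yields, after the universal constants from Proposition \ref{skew}(1) and (3) and from Lemma \ref{spike} are absorbed, a sequence with $\varepsilon_n \to 0$ rather than merely $\varepsilon_n$ bounded — this is where a scaling/normalization argument is needed, together with choosing the family $a_1,\dots,a_n$ to be the first $n$ elements of a fixed dense sequence so that a diagonal argument applies. A secondary technical point is ensuring the trace of the resulting projection stays in a fixed compact subinterval of $(0,1)$, which uses that $M$ is a $\mathrm{II}_1$ (not merely finite) factor; and one must verify that commutation with $(\tau^{1/2})^\omega$ in $M^\omega_{GR}$ combined with almost-commutation with a dense sequence genuinely places $P$ inside the Ocneanu corner $M^\omega$ and its relative commutant, rather than just in $M^\omega_{GR}$.
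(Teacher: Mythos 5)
Your high-level strategy matches the paper's (argue by contradiction, pass to the Groh--Raynaud ultraproduct, use skew information and Lemma~\ref{spike} to extract projections, contradict $M_\omega=\C$), but there is a genuine gap at exactly the place you flag as a ``secondary technical point'': controlling the trace of the spike projections $p_n$ produced by Lemma~\ref{spike}.

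The ``cutting argument'' you propose does not work. Lemma~\ref{spike} produces some non-zero $p_n=1_{[c_n,\infty)}(|x_n|)$ with $I(p_n,\tau^{1/2})\le 4\sqrt{\varepsilon_n}\,\tau(p_n)$, but gives no lower bound on $\tau(p_n)$; if $\tau(p_n)\to 0$ along $\omega$, then $P=(p_n)^\omega$ satisfies $\tau^\omega(P)=0$, so $eP=0$ where $e=\mathrm{supp}(\tau^\omega)$, and $P$ produces nothing inside the Ocneanu corner $M^\omega$ --- no contradiction. Passing to $p_n^\perp$ only helps if $\tau(p_n)\to 1$, and then you are back in the same $\tau\to 0$ trap. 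Replacing $p_n$ by a subprojection of trace $1/2$ does not preserve the almost-commutation property (an arbitrary subprojection of an almost-central projection has no reason to be almost-central), so one cannot simply ``cut down to $1/2$.'' What is actually needed is an \emph{aggregation} argument, not a cutting argument: one must combine many small almost-central projections into one of trace $1/2$, using crucially that the almost-commutation estimate scales \emph{proportionally} with $\tau(p)$ and holds uniformly over all corners $qMq$ of $M$.

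The paper's proof handles this with two ingredients that your sketch omits. First, it uses fullness \emph{already at the level of the vector} $\alpha=(\widehat{x_i})^\omega$: since $e\alpha$ is $M$-central in $L^2(M^\omega)$ and orthogonal to $\widehat{1}^\omega$, fullness ($M'\cap M^\omega=\C$) forces $e\alpha=0$. This step is never performed in your proposal, and it is what lets one truncate $x_i$ to $y_i=x_i 1_{[\varepsilon^{-1/2},\infty)}(x_i)$ with $\tau(\mathrm{supp}(y_i))\le\varepsilon$ while keeping $(\widehat{y_i})^\omega=\alpha$; this is precisely how one obtains almost-central projections of \emph{arbitrarily small} trace (the paper's Claim), not just \emph{some} non-zero trace. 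Second, the paper runs a Zorn/maximality argument over the poset $R=\{p\in\mathcal{P}(M):\|ap-pa\|_2^2\le\varepsilon\tau(p),\ \tau(p)\le 1/2\}$, using the Claim applied to the corner $p^\perp M p^\perp$ to push a maximal element up to trace exactly $1/2$. Without both of these steps your $P$ might simply be killed by $e$, and the proof does not close. (A smaller issue: the ``direct'' version in your first paragraph, extracting $X\in M'\cap M^\omega$ from the $x_n$ themselves, fails because the $x_n$ need not be $\|\cdot\|_\infty$-bounded, so $(x_n)^\omega$ is not an element of $M^\omega_{GR}$; only the vector $(\widehat{x_n})^\omega\in L^2(M)^\omega$ is defined. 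Your projection version avoids this, but inherits the trace-control gap above.)
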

\begin{proof}
First we fix the following notation: for all $x \in M$, we let $\widehat{x}:=x\tau^{1/2}=\tau^{1/2}x \in L^{2}(M)$. Now, suppose, by contradiction, that we can find a net $(x_i)_{i \in I}$ in $M$ (not necessarily bounded) such that: 
\begin{itemize}
\item $|| x_i  ||_2=1$ for all $i$.
\item $\tau(x_i)=0$ for all $i$.
\item $|| x_i a - a x_i ||_2 \rightarrow 0$ for every $a \in M$.
\end{itemize}
Take $\omega$ a cofinal ultrafilter on $I$ and let $\alpha=(\widehat{x_i})^{\omega} \in L^{2}(M)^{\omega}=L^{2}(M_{GR}^{\omega})$. Then we have
\begin{itemize}
\item $||\alpha ||=1$
\item $\alpha \perp \widehat{1}^{\omega}$.
\item $a^{\omega}\alpha=\alpha a^{\omega}$ for all $a \in M$. 
\end{itemize}
Note also that since $x \mapsto \langle x^{\omega} \alpha, \alpha \rangle$ is a tracial state on $M$ then it must coincide with $\tau$.

Now, let $e=\mathrm{supp}(\tau^{\omega}) \in  \mathcal{Z}(M_{GR}^{\omega})$ so that $eM_{GR}^{\omega}=M^{\omega}$. We have $e\alpha=\alpha e \in L^{2}(M^{\omega})$. Observe that $e\alpha$ is still $M$-central. Since $M$ is full, we have that $M' \cap M^{\omega}=\C$. Hence this implies that $e\alpha$ is proportional to $\widehat{1}^{\omega}$. But we have $\alpha \perp \widehat{1}^{\omega}$ and $e\widehat{1}^{\omega}=\widehat{1}^{\omega}$. Hence we must have $e\alpha=\alpha e=0$.

Now using the fact that $e\alpha=0$, we modify the $x_i$'s to make their supports very small. First, since $e|\alpha|=0$ and $a^{\omega}|\alpha|=|\alpha |a^{\omega}$ for all $a \in M$, we can suppose that $x_i \geq 0$ for all $i$ with no harm. Take $\varepsilon > 0$ and let $y_i=x_i 1_{[\varepsilon^{-1/2},+\infty)}(x_i)$ and $z_i=x_i-y_i$. Then the net $(z_i)_{i \in I}$ is bounded in $M$ so that $z=(z_i)^{\omega}$ defines an element of $M^{\omega}_{GR}$. Since $e\alpha  = 0$, we get that $\alpha=(\widehat{x_i})^\omega$ is orthogonal to $(\widehat{z_i})^\omega=\widehat{1}^\omega z$. Since $z_i x_i=z_i^{2}$, this implies that $(\widehat{z_i} )^\omega=0$. Hence $\alpha=(\widehat{x_i} )^\omega=(\widehat{y_i} )^\omega$. Moreover, since $\mathrm{supp}(y_i) \leq 1_{[\varepsilon^{-1/2},+\infty)}(x_i) \leq \varepsilon x_i^{2} $, we have $ \tau(\mathrm{supp}(y_i)) \leq \varepsilon ||x_i ||_2^{2} = \varepsilon$ for all $i \in I$.  Now we prove the following claim:
\begin{claim}
For every finite family  $a_1,\dots, a_n \in M^{+}$ and every $\varepsilon > 0$ there exists a non-zero projection $p \in M$ with $\tau(p) \leq \varepsilon$ such that:
\[ \sum_k ||pa_k - a_k p ||_2^2 \leq \varepsilon \tau(p) \]
\end{claim}

Indeed, take a finite family $a_1,\cdots, a_n \in M^{+}$. Since $\langle x^{\omega} \alpha, \alpha \rangle =\tau(x)$ for all $x \in M$, we have $||a_k^{\omega}\alpha ||^{2}_2=||a_k||^{2}_2$ for all $k$. Moreover we have $a_k^{\omega}\alpha=\alpha a_k^{\omega}$ for all $k$. Hence taking $x:=y_i$ for $i$ large enough, we get a non-zero $x \in M^{+}$ such that:
\begin{itemize}
\item $\sum_k ||a_k x - x a_k ||_2^2 \leq \varepsilon \sum_k || a_k x ||_2^2$.
\item $\tau(\mathrm{supp}(x)) \leq \varepsilon$.
\end{itemize}
Let $a=(a_1,\dots, a_n) \in (M^{(n)})^{+}$. Observe that $|| x a ||^2=\sum_k ||x a_k||^2$ and $|| x a - a x ||_2^2=\sum_k ||x a_k - a_k x ||_2^2$. Thus we have $|| x a - a x||_2^2 \leq \varepsilon || x a ||_2^2$. Hence by Lemma \ref{spike}, we can take $c > 0$ such that the projection $p=1_{[c,+\infty)}(x)$ is non-zero and satisfies $||p a - a p||_2^2 \leq 4 \sqrt{\varepsilon}||p a ||_2^{2} \leq 4 \sqrt{\varepsilon} ||a||_\infty^{2} \tau(p)$. Moreover $\tau(p) \leq \tau(\mathrm{supp}(x)) \leq \varepsilon$. Since $\varepsilon$ is arbitrary, we have proved the claim. Observe also that the claim holds not only for $M$ but also for all of its corners $qMq$ because we can repeat the same argument using the vector $q^{\omega} \alpha=q^{\omega} \alpha q^{\omega} \in L^{2}(qMq)^{\omega}$ ($q^{\omega} \alpha$ is non-zero because $\langle q^{\omega} \alpha, \alpha \rangle=\tau(q) \neq 0$).

Finally, we will patch the projections of the claim together in order to construct a central net of projections $(p_i)_{i \in I}$ in $M$ such that $\tau(p_i)=\frac{1}{2}$ and this will contradict the fullness of $M$. So take a finite family $a_1,\cdots, a_n \in M^{+}$ and $\varepsilon > 0$. We want to construct a projection $p \in M$ such that $\tau(p)=\frac{1}{2}$ and $\sum_k ||pa_k - a_k p ||_2^2 \leq \varepsilon$.  Let $a=(a_1,\dots, a_n) \in (M^{(n)})^+$. Let $R$ be the set of all projections $p \in \mathcal{P}(M)$ such that $||ap-pa||_2^{2} \leq \varepsilon \tau(p)$ and $\tau(p) \leq \frac{1}{2}$. The poset $R$ is clearly inductive because $R$ is closed in $\mathcal{P}(M)$ and thus for any increasing net $p_i \in R, i \in I$ we have $\bigvee_i p_i \in R$. By Zorn's lemma, take $p$ a maximal element in $R$. Let $\delta=\frac{1}{2}- \tau(p) \geq 0$. Suppose that $\delta > 0$. Let $b = p^\perp a p^\perp$. Since the claim holds for all the corners of $M$, we can find a non-zero projection $q \in p^\perp Mp^\perp$ such that $||qb-bq||_2 \leq \varepsilon \tau(q)$ and $\tau(q) \leq \delta$. Let $p'=p+q$. Then we check easily that $||p' a - a p'||_2^{2} \leq \varepsilon \tau(p')$ and $\tau(p') \leq \frac{1}{2}$. Thus $p' \in R$ and this contradicts the maximality of $p$. Hence $\delta=0$ which means that we have found a projection $p$ such that $\sum_k ||pa_k-a_kp||_2^{2} \leq \varepsilon$ and $\tau(p)=\frac{1}{2}$ as we wanted.
\end{proof}

Now, we want to prove the spectral gap theorem for full type $\mathrm{III}$ factors. First, we need a lemma to deal with the case where the ultraproduct vector we construct lies in $L^{2}(M^{\omega})$. 

\begin{lem} \label{full_unique}
Let $M$ be a full $\sigma$-finite factor. Let $\varphi$ be a faithful normal state on $M$ and let $\xi = \varphi^{1/2}$. Let $\omega$ by any cofinal ultrafilter on any directed set $I$. Suppose that $\alpha \in L^{2}(M^{\omega})$ satisfies $\alpha \xi^{\omega}=\xi^{\omega}\alpha$ (the equality holds in $L^{1}(M^{\omega})=(M^{\omega})_*$) and 
\[ \forall a,b \in M, \, a\xi=\xi b \Rightarrow a^\omega \alpha= \alpha b^\omega \]
Then $\alpha\in \C \xi^\omega$.
\end{lem}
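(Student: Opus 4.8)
The idea is to reduce the statement to the fullness criterion $M_\omega = \C$ by showing that $\alpha$ gives rise to an element of the asymptotic centralizer. Write $\alpha = (\alpha_i)^\omega$ with $\alpha_i \in L^2(M)$ and, after replacing $\alpha$ by $e\alpha e$ (which changes nothing since $\alpha \in L^2(M^\omega) = eL^2(M)^\omega e$), we may assume each $\alpha_i \in e_i L^2(M) e_i$ in a suitable sense. The key point is that the hypothesis $\alpha \xi^\omega = \xi^\omega \alpha$ together with the implication $a\xi = \xi b \Rightarrow a^\omega \alpha = \alpha b^\omega$ should let us manufacture a bounded operator out of $\alpha$. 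Indeed, since $\xi = \varphi^{1/2}$ is the square root of a faithful normal \emph{state}, the condition $\alpha \xi^\omega = \xi^\omega \alpha$ (an identity in $L^1(M^\omega)$) says exactly that $\alpha$ commutes with $\varphi^\omega$ as $\tau$-measurable operators in the core of $M^\omega$, so by Lemma \ref{strongly_commute}-type reasoning $\alpha$ and $\varphi^\omega$ strongly commute; in particular $(\varphi^\omega)^{it} \alpha (\varphi^\omega)^{-it} = \alpha$, i.e.\ $\alpha$ is fixed by the modular flow of $\varphi^\omega$.

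First I would use this modular invariance to write $\alpha = a^\omega \xi^\omega$ for some $a \in (M,\|\cdot\|)^\omega$ with $\|a\| \le \|\alpha\|/\langle\varphi\rangle^{1/2}$: the point is that a vector in $L^2(M^\omega)$ which is fixed by the modular flow of $\varphi^\omega$ and dominated appropriately lies in $M^\omega \xi^\omega$, using the characterization recalled at the end of Section 3 that $|\eta|^2 \le \lambda\psi$ is equivalent to $\eta \in M\psi^{1/2}$, applied inside $M^\omega$ (one needs $|\alpha|^2 \le \lambda \varphi^\omega$, which follows from strong commutation of $\alpha$ with $\varphi^\omega$ after a cutoff; more precisely decompose $\alpha$ over the spectral projections of $\varphi^\omega$). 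Write $a = (a_i)^\omega$ with $\sup_i\|a_i\| < \infty$ and $\alpha = (a_i \xi)^\omega$. Next I would feed the hypothesis: for $b \in M$ with $\xi b = b'\xi$, i.e.\ $b' = \sigma^\varphi_{-i}(b)$ when $b$ is analytic, the implication gives $(b')^\omega \alpha = \alpha b^\omega$, that is $(b' a_i \xi)^\omega = (a_i \xi b)^\omega = (a_i b' \xi)^\omega$, so $(b' a_i - a_i b')\xi \to 0$ along $\omega$, hence $\|\,[b', a_i]\,\|_\varphi \to 0$ for all $b'$ in the $*$-algebra of analytic elements, which is $\sigma$-strongly dense. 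Combined with the uniform bound on $\|a_i\|$, a standard Kaplansky-type argument upgrades this to $\|[c, a_i]\|_\varphi \to 0$ for \emph{all} $c \in M$, and symmetrically $\|[c, a_i^*]\|_\varphi \to 0$; thus $(a_i)$ is a centralizing net, so $ae = ea \in M_\omega$.

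By fullness, $M_\omega = \C$, hence $ae = \lambda e$ for some $\lambda \in \C$, which gives $\alpha = a^\omega \xi^\omega = \lambda \xi^\omega$ (recall $\xi^\omega = \xi^\omega e = e\xi^\omega$ and $\alpha \in eL^2(M)^\omega e$), proving $\alpha \in \C\xi^\omega$. The main obstacle I anticipate is the passage $\alpha \mapsto a$ with a uniform norm bound: showing that an $L^2$-vector commuting with $\varphi^\omega$ and satisfying the intertwining hypothesis is actually ``bounded'' relative to $\xi^\omega$, i.e.\ verifying $|\alpha|^2 \le \lambda\varphi^\omega$ for some $\lambda$. This is where one must be careful, since a priori $\alpha$ is merely square-integrable; the resolution is to cut $\alpha$ by spectral projections $1_{[0,n]}(\varphi^\omega)$ or, dually, to run the centralizing-net argument on the cutoffs $a_i^{(n)} = a_i 1_{[1/n,n]}(\varphi)$ and let $n \to \infty$ afterwards, using that each cutoff is automatically bounded and that the limit recovers $\alpha$. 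Once the element $a \in M_\omega$ is in hand, the rest is immediate from the fullness hypothesis.
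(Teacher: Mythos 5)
There is a genuine gap in the proposal, at precisely the step you flag as "the main obstacle." You want to write $\alpha = a^\omega\xi^\omega$ with $a$ bounded, and you claim that $|\alpha|^2\le\lambda\varphi^\omega$ "follows from strong commutation of $\alpha$ with $\varphi^\omega$ after a cutoff," but this is not true and the cutoff you propose does not work. Commutation of $\alpha$ with $\varphi^\omega$ (equivalently, invariance of $\alpha$ under the modular flow of $\varphi^\omega$) carries no boundedness information whatsoever: in the tracial case $\xi=\tau^{1/2}$ \emph{every} vector of $L^2(M^\omega)$ commutes with $\xi^\omega$, yet the generic $L^2$-vector is unbounded relative to $\tau^{1/2}$. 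Moreover, cutting by $1_{[0,n]}(\varphi^\omega)$ is the wrong object on several counts: (i) $1_{[0,n]}(\varphi^\omega)\alpha$ does not satisfy the scaling relation $\theta_\lambda(\cdot)=\lambda^{1/2}(\cdot)$, so it is not even an element of $L^2(M^\omega)$; (ii) since $1_{[0,n]}(\varphi^\omega)$ does not commute with $a^\omega$, the cut vector no longer satisfies the intertwining hypothesis; and (iii) even formally, $|1_{[0,n]}(\varphi^\omega)\alpha|^2=1_{[0,n]}(\varphi^\omega)\psi$ with $\psi=\alpha^*\alpha$, which is not dominated by a multiple of $\varphi^\omega$ unless one already knows $\psi\le\lambda\varphi^\omega$ on the range of that projection — which is what we are trying to prove. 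The correct object to cut is the Radon--Nikodym-type density $h=\psi(\varphi^\omega)^{-1}$ and its spectral projections $1_{[1/n,n]}(h)$, but to know that $h$ is a positive operator affiliated with $M^\omega$ with trivial kernel, that these projections lie in $M'\cap M^\omega\cap(M^\omega)^{\varphi^\omega}$, and that the resulting bounded elements still satisfy the intertwining relation, one needs essentially all of the analysis the paper carries out: showing the right support of $\alpha$ is $1$ (which you skip entirely), showing $\psi$ commutes with $\varphi^\omega$, and showing $a^\omega\psi=\psi c^\omega$ for analytic $a$ with $a\varphi=\varphi c$.

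In fact, the paper's proof cleverly sidesteps the boundedness problem altogether: instead of trying to bound $\alpha$ relative to $\xi^\omega$, it proves the \emph{equality} $\psi=\varphi^\omega$ (via the right-support argument, Haagerup's theory of analytic generators to identify the modular flows, and the fact that the Connes cocycle $(D\psi:D\varphi^\omega)_t$ lands in $M_\omega=\C$), and only then does boundedness appear automatically through the polar decomposition $\alpha=u\xi^\omega$ with $u$ a \emph{unitary}. That unitary is then killed by $M_\omega=\C$. Your proposal has the order backwards — boundedness first, then centrality — and the first step is the one that fails. If you want to pursue your route you must first prove the support of $\psi$ is full and then either run the cocycle argument or make the $h$-cutoff rigorous; at that point you would essentially have rediscovered the paper's proof.
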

\begin{proof}
We suppose that $||\alpha || =1$ without loss of generality. Let $p$ be the right support of $\alpha$ (the smallest projection $p \in M^\omega$ such that $\alpha p=p$). For every $\varphi$-analytic element $b \in M$ there exists $a \in M$ such that $a\xi=\xi b$. Hence $a^\omega\alpha = \alpha b^\omega$. This means that $\alpha b^\omega p= \alpha b^{\omega}$ and therefore $pb^\omega p=pb^\omega $. Since the same is true for $b^*$ we get $b^\omega p=pb^\omega$ and since the $\varphi$-analytic elements are dense in $M$, we conclude that $p \in M'\cap M^{\omega}$. Since $\alpha \xi^{\omega}=\xi^{\omega}\alpha$, we have also that $p$ is in the centralizer of $\varphi^{\omega}$. Hence $p \in M_\omega = \C$ which means that $p=1$. Thus $\psi= \alpha^*\alpha \in (M^{\omega})^+_*$ is a faithful normal state. Let $a \in M$ be a $\varphi$-analytic element and take $b$ and $c$ in $M$ such that $a\xi=\xi b$ and $b\xi = \xi c$ so that $a\varphi=\varphi c$. Then we have $b^* \xi = \xi a^*$. Hence we have $(b^*)^\omega \alpha = \alpha (a^*)^\omega$ and thus $a^\omega \alpha^*=\alpha^* b^\omega$. Thus we have $a^\omega \psi = a^\omega \alpha^*\alpha = \alpha^* b^\omega \alpha=\alpha^*\alpha c^\omega=\psi c^\omega$. From this and \cite[Corollary 3.4]{haagerup1979operator}, we conclude that the graph of the analytic generator of $\sigma^\varphi$ is contained in the graph of the analytical generator of $\sigma^\psi$. By \cite[Lemma 4.4]{haagerup1979operator}, this implies that $\sigma_t^\psi(x)=\sigma_t^\varphi(x)=\sigma_t^{\varphi^{\omega}}(x)$ for all $x \in M$ and all $t \in \R$. Hence the Connes-Radon-Nikodym cocycle $u_t=\psi^{{\rm i} t}(\varphi^{\omega})^{-{\rm i} t}$ is in $M' \cap M^\omega$. Moreover, since $\alpha \xi^{\omega}=\xi^{\omega}\alpha$ we have $\psi \varphi^{\omega}=\varphi^{\omega} \psi$ and therefore $\psi^{it}\varphi^{\omega}=\varphi^{\omega} \psi^{it}$ for all $t$ by Lemma \ref{strongly_commute}. Hence, we have $u_t \varphi^{\omega}=\varphi^{\omega}u_t$. Altogether, we get that $u_t \in M_\omega=\C$ for all $t \in \R$ and therefore $\psi=\varphi^\omega$. By polar decomposition, we thus have $\alpha = u |\alpha |=u \xi^{\omega}$ for some $u \in \mathcal{U}(M^\omega)$. Now take $x,y \in M$ such that $x \xi =\xi y$. Then we also have $x^\omega \alpha = \alpha y^\omega$. Therefore we get $x^\omega u \xi^\omega=u \xi^\omega y^\omega=u x^\omega \xi^\omega$ which means that $x^\omega u=ux^\omega$. Since the $\varphi$-analytic elements are dense in $M$ we get that $u \in M' \cap M^\omega$. And since $\alpha \xi^{\omega}=\xi^{\omega}\alpha$ we get $u \varphi^{\omega}=\varphi^{\omega}u$ so that $u \in M_\omega=\C$ and we are done.
\end{proof}

We also need a type $\mathrm{III}$ version of the maximality argument.

\begin{lem} \label{maximality}
Let $M$ be a $\sigma$-finite type $\mathrm{III}$ factor. Suppose that for every finite family of vectors $\xi_1,\dots, \xi_n \in L^{2}(M)^+$, every $\varepsilon > 0$ and every neighborhood $\mathcal{V}$ of $0$ in $\mathcal{P}(M)$,  there exists a non-zero projection $p \in \mathcal{V}$ such that:
\[ \sum_k ||p\xi_k - \xi_k p ||^2 \leq \varepsilon \sum_k ||p \xi_k||^2 \]
Then $M$ is not full.
\end{lem}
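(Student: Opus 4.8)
The plan is to turn the hypothesis into a non-trivial centralizing net of projections, in the spirit of the ``patch up to $\tau(p)=\tfrac12$'' argument in the proof of Theorem \ref{gap_connes}. Fix once and for all a faithful normal state $\varphi_0$ on $M$; it plays the r\^ole that the trace plays in the $\mathrm{II}_1$ case. The absence of a trace is compensated by two structural features of a $\sigma$-finite type $\mathrm{III}$ factor $M$: every corner $(1-e)M(1-e)$ with $e\ne 1$ is again a $\sigma$-finite type $\mathrm{III}$ factor \emph{isomorphic to} $M$, so the hypothesis of the lemma holds verbatim inside every such corner; and any family of mutually orthogonal non-zero projections of $M$ is necessarily countable, because their $\varphi_0$-masses are positive and sum to at most $1$.

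First I would pass from the vector form of the hypothesis to a statement about states: feeding the hypothesis the family $\xi_k=\psi_k^{1/2}$ for faithful normal states $\psi_1,\dots,\psi_m$ and using Proposition \ref{skew}(1) (which gives $\|p\psi_k-\psi_k p\|^2\le 4\,\|p\psi_k^{1/2}-\psi_k^{1/2}p\|^2$ for states), one obtains: for all faithful normal states $\psi_1,\dots,\psi_m$, all $\varepsilon>0$, and all strong neighbourhoods $\mathcal V$ of $0$ in $\mathcal P(M)$, there is a non-zero $p\in\mathcal V$ with $\|p\psi_k-\psi_k p\|<\varepsilon$ for every $k$, and by the first structural fact this also holds inside every corner. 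From this I would deduce an \emph{adjunction step}: given a projection $e\ne 1$, faithful normal states $\varphi_1,\dots,\varphi_m$ on $M$ (which we may take to include $\varphi_0$), and $\eta,\delta>0$, there is a non-zero $q\le 1-e$ with $\|q\varphi_j-\varphi_j q\|<\eta$ for all $j$ and $\varphi_0(q)<\delta$. Indeed, apply the state version inside $(1-e)M(1-e)$ to the restrictions $\psi_j:=(1-e)\varphi_j(1-e)$ with a small parameter and a neighbourhood of $0$ small enough with respect to all the $\psi_j$; this produces such a $q$ with $\|q\psi_j-\psi_j q\|$ and $\varphi_j(q)=\psi_j(q)$ as small as we wish. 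Since $qe=eq=0$ one has $q\varphi_j-\varphi_j q=(q\psi_j-\psi_j q)+q\varphi_j e-e\varphi_j q$, and $\|q\varphi_j e\|,\|e\varphi_j q\|\le\sqrt{\varphi_j(q)}$ by Cauchy--Schwarz, so $\|q\varphi_j-\varphi_j q\|<\eta$ after choosing the parameters small enough.

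Then, given $\varphi_1,\dots,\varphi_m$ (which I may assume to contain $\varphi_0$) and $\varepsilon>0$, I would iterate the adjunction step by transfinite recursion: having produced orthogonal non-zero pieces summing to $p_{<\xi}$, stop if $\varphi_0(p_{<\xi})\ge\tfrac14$, and otherwise (then $1-p_{<\xi}\ne 0$ since $\varphi_0$ is faithful) adjoin via the adjunction step inside $(1-p_{<\xi})M(1-p_{<\xi})$ a non-zero $q_\xi\le 1-p_{<\xi}$ with $\varphi_0(q_\xi)<\tfrac14$ and $\|q_\xi\varphi_j-\varphi_j q_\xi\|<\varepsilon\,2^{-k}$, $k$ being the number of pieces so far. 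By the second structural fact this recursion, adjoining a non-zero orthogonal piece at every non-terminal stage, has only countably many stages, so it must reach the stopping condition; this yields a projection $p$ with $\varphi_0(p)\in[\tfrac14,\tfrac12)$ and $\|p\varphi_j-\varphi_j p\|\le\sum_k\|q_k\varphi_j-\varphi_j q_k\|<\varepsilon$ for all $j$ (a routine $M_*$-convergence argument, using $\sum_k\varphi_j(q_k)\le 1$, justifies that $p\varphi_j-\varphi_j p=\sum_k(q_k\varphi_j-\varphi_j q_k)$ in $M_*$). Now let $(p_i)_{i\in I}$ be the net indexed by pairs $i=(\mathcal F,\varepsilon)$, with $\mathcal F$ a finite set of faithful normal states containing $\varphi_0$, directed by inclusion and by $\varepsilon\downarrow 0$, where $p_i$ is the projection just constructed. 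For any $\psi\in M_*^+$, applying the estimate to the faithful normal state proportional to $\psi+\varphi_0$ gives $\|p_i\psi-\psi p_i\|\to 0$, hence $\|p_i\psi-\psi p_i\|\to 0$ for all $\psi\in M_*$, so $(p_i)$ is a centralizing net. It is not trivial: if $p_i-\lambda_i\to 0$ strongly then $\varphi_0\big((p_i-\lambda_i)^*(p_i-\lambda_i)\big)\to 0$, but $\varphi_0\big((p_i-\lambda_i)^*(p_i-\lambda_i)\big)=\varphi_0(p_i)(1-2\operatorname{Re}\lambda_i)+|\lambda_i|^2\ge\varphi_0(p_i)\big(1-\varphi_0(p_i)\big)\ge\tfrac{3}{16}$, a contradiction. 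Therefore $M$ is not full.

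The step I expect to be the real obstacle is the patching, i.e.\ the adjunction step together with its iteration: one must enlarge $p$ up to ``medium size'' while keeping the commutators $\|p\varphi_j-\varphi_j p\|$ small, and the naive single-step estimates accumulate error uncontrollably. The resolution is the combination above: corners of type $\mathrm{III}$ factors are isomorphic to $M$, so the hypothesis remains available exactly where the successive pieces have to be found, and a summable budget $\varepsilon\,2^{-k}$ for the error of the $k$-th piece closes the argument because orthogonality forces the recursion to terminate after countably many steps.
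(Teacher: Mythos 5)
Your overall strategy --- produce a centralizing net of projections $p_i$ with $\varphi_0(p_i)$ bounded away from $0$ and $1$, by patching small pieces found via the type-$\mathrm{III}$ corner isomorphisms --- is the paper's strategy. The conversion of the hypothesis into a statement about states, the adjunction step with the Cauchy--Schwarz correction for the off-corner terms, and the non-triviality estimate $\varphi_0\bigl((p_i-\lambda_i)^*(p_i-\lambda_i)\bigr)\ge\varphi_0(p_i)(1-\varphi_0(p_i))\ge\tfrac{3}{16}$ are all correct. But the patching step has a genuine gap.

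You rewrite the hypothesis as the \emph{absolute} bound $\|q\varphi_j-\varphi_j q\|<\eta$ (via Proposition \ref{skew}(1)) and then iterate with a summable budget $\varepsilon\,2^{-k}$ over a transfinite recursion. The problem is that the recursion can have limit stages: the adjunction step gives pieces of arbitrarily small, and in particular no a priori bounded-below, $\varphi_0$-mass, so at stage $\omega$ one can have $\varphi_0(p_{<\omega})<\tfrac14$, and then $q_\omega$ would need a budget $\varepsilon\,2^{-\omega}$, which is not a positive real. Your closing remark that the recursion ``has only countably many stages'' is weaker than what you need, namely ``at most $\omega$ stages''; $\sigma$-finiteness bounds the family of disjoint pieces by $\aleph_0$, but a countable ordinal can exceed $\omega$. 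Unless you can argue that the recursion always terminates by stage $\omega$ --- which the adjunction step alone does not give you --- the accumulated error is not controlled, and this is precisely the obstacle you correctly flag at the end.

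The paper avoids this by never passing to an absolute bound. It keeps the $L^2$-vectors $\xi=(\xi_1,\dots,\xi_n)$ in the diagonal $M^{(n)}$ picture and runs Zorn's lemma on the set $R$ of projections $p$ with $I(p,\xi)\le\varepsilon\|p\xi\|^2$ and $\varphi(p)\le\tfrac12$. The point is that this is a \emph{relative} bound, and relative bounds are stable under orthogonal addition: by Proposition \ref{skew}(5),
\[
I(p+q,\xi)=I(p,q^{\perp}\xi q^{\perp})+I(q,p^{\perp}\xi p^{\perp})\le\varepsilon\|p\xi\|^{2}+\varepsilon\|q\xi\|^{2}=\varepsilon\|(p+q)\xi\|^{2},
\]
so no budget accumulates. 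The poset $R$ is closed under increasing sups, a maximal $p$ exists, and if $\varphi(p)<\tfrac12$ the hypothesis in the corner $p^\perp Mp^\perp$ produces a non-zero $q$ contradicting maximality; hence $\varphi(p)=\tfrac12$ exactly, in one step. Your passage to $M_*$-norms via Proposition \ref{skew}(1) throws away the factor $\sum_k\|p\xi_k\|^2$ on the right-hand side of the hypothesis, which is exactly the relative structure that makes the maximality argument close. Keeping the $L^2$-criterion $I(p,\xi)\le\varepsilon\|p\xi\|^2$ intact throughout the patching is the missing idea.
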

\begin{proof}
Let $\varphi$ be a faithful normal state on $M$. We will show that there exists a net of projections $p_i \in \mathcal{P}(M), i \in I$ such that $\varphi(p_i)=\frac{1}{2}$ and $I(p_i,\xi) \rightarrow 0$ for all $\xi \in L^{2}(M)^{+}$. This will prove that $M$ is not full because the net $(p_i)_{i \in I}$ will be centralizing thanks to Proposition \ref{skew}.$(1)$. So let $\xi_1,\dots,\xi_n \in L^2(M)^+$ be a finite family of vectors in the positive cone and let $\varepsilon > 0$. We want to construct a projection $p \in M$ such that $I(p,\xi_k) \leq \varepsilon$ for all $k$ and $\varphi(p)=\frac{1}{2}$. Let $\xi=(\xi_1,\dots, \xi_n) \in L^2(M^{(n)})^+$. Let $R$ be the set of all projections $p \in \mathcal{P}(M)$ such that $I(p,\xi) \leq \varepsilon ||p\xi ||^2$ and $\varphi(p) \leq \frac{1}{2}$. The poset $R$ is clearly inductive because $R$ is closed in $\mathcal{P}(M)$ and thus for any increasing net $p_i \in R, i \in I$ we have $\bigvee_i p_i \in R$. By Zorn's lemma, take $p$ a maximal element in $R$. Let $\delta=\frac{1}{2}- \varphi(p) \geq 0$. Suppose that $\delta > 0$. Let $\eta = p^\perp \xi p^\perp$. Since $M$ is a $\sigma$-finite type $\mathrm{III}$ factor and $p^{\perp} \neq 0$ we have $M \simeq p^\perp Mp^\perp$ and thus $p^\perp Mp^\perp$ also satisfies the assumption of the lemma. Therefore, we can find a non-zero projection $q \in p^\perp Mp^\perp$ such that $I(q,\eta) \leq \varepsilon ||q \eta ||^2$ and $\varphi(q) \leq \delta$. Let $p'=p+q$. Then we have  $I(p',\xi) = I(p,q^{\perp}\xi q^{\perp}) + I(q,p^{\perp} \xi p^{\perp})$. Since $I(p,q^{\perp}\xi q^{\perp}) \leq I(p,\xi) \leq \varepsilon ||p \xi ||^{2}$ and $I(q,p^{\perp} \xi p^{\perp})=I(q,\eta) \leq \varepsilon ||q \eta ||^{2} \leq \varepsilon ||q \xi ||^{2}$ we get $I(p',\xi) \leq \varepsilon ||p\xi ||^2 + \varepsilon ||q \xi ||^2=\varepsilon ||p'\xi||^2$. Moreover, by construction, we have $\varphi(p') \leq \frac{1}{2}$. Thus $p' \in R$ and this contradicts the maximality of $p$. Hence $\delta=0$ which means that we have found a projection $p$ such that $\sum_k I(p,\xi_k) = I(p,\xi) \leq \varepsilon || p\xi ||^2$ and $\varphi(p)=\frac{1}{2}$. Since $\varepsilon$ was arbitrary, we are done.
\end{proof}

Now we are ready to prove our main theorem.

\begin{proof}[Proof of Theorem \ref{main}]
Suppose by contradiction that the conclusion of the theorem is not true. We will prove the hypothesis of Lemma \ref{maximality} and this will contradict the fullness of $M$. So take a family of non-zero vectors $\xi_1, \dots, \xi_n \in L^{2}(M)^{+}$.  Let $\varphi=\xi_1^{2}+\dots + \xi_n^{2} \in M_*^{+}$. We want to construct a projection $p$ as in Lemma \ref{maximality} and for this we can suppose that $\varphi$ is faithful (otherwise just take $p$ a small projection in $\mathrm{supp}(\varphi)^{\perp}$). Also, without loss of generality, we can suppose that $\varphi$ is a state. Now, let $\xi=\varphi^{1/2}$. Take a net $(x_i)_{i \in I}$ in $M$ such that:
\begin{itemize}
\item $|| x_i  ||_\varphi=||x_i \xi ||=1$ for all $i$.
\item $\varphi(x_i)=\langle x_i \xi, \xi \rangle =0$ for all $i$.
\item $x_i \eta - \eta x_i \rightarrow 0$ for every $\eta \in L^{2}(M)^{+}$ with $\eta^{2} \leq \varphi$.
\end{itemize}
Then, since $\xi M \cap M \xi$ is the linear span of $\{ \eta \in L^2(M)^{+} \mid \eta^2 \leq \varphi \}$, we have in fact $x_i \eta - \eta x_i \rightarrow 0$ for all $\eta \in \xi M \cap M \xi$. Now, let $\omega$ be a cofinal ultrafilter on $I$. For all $\eta \in \xi M \cap M \xi$ we have $(x_i \eta)^\omega=(\eta x_i)^\omega$ by construction (note that even though the net $(x_i)_{i \in I}$ is not bounded in $M$, the net $(x_i \eta)_{ i \in I}$ is bounded in $L^2(M)$ for all $\eta \in \xi M$). Define $\alpha =(x_i \xi)^\omega=(\xi x_i)^{\omega} \in L^2(M)^\omega$. Then we have
\begin{itemize}
\item $||\alpha ||=1$
\item $\alpha \perp \xi^{\omega}$
\item $\alpha \xi^\omega=(\xi x_i \xi)^\omega = \xi^\omega \alpha$ (where we use \cite{raynaud2002ultrapowers}[Theorem 5.1])
\item $a^{\omega}\alpha=\alpha b^{\omega}$ for all $a,b \in M$ such that $a\xi=\xi b$. 
\end{itemize} 
The last property comes from the fact that $a^{\omega} \alpha=(\eta x_i)^\omega = (x_i \eta)^\omega = \alpha b^{\omega}$ where $\eta=a\xi=\xi b \in \xi M \cap M \xi$.

Let $e$ be the support of $\xi^\omega$ in $M^\omega_{\mathrm{GR}}$ so that $M^\omega = e(M^\omega_{\mathrm{GR}})e$ and $L^2(M^\omega)=eL^2(M)^\omega e$. Since $\alpha \xi^\omega= \xi^\omega \alpha$ we have $\alpha':=e\alpha=\alpha e \in L^2(M^{\omega})$ and $\alpha'\xi^\omega=\xi^\omega \alpha'$. And since $x^\omega e=ex^\omega$ for all $x \in M$, we have $a^{\omega}\alpha'=\alpha' b^{\omega}$ for all $a,b \in M$ such that $a\xi=\xi b$.  Since $M$ is full, we conclude by Lemma \ref{full_unique} that $\alpha'=\lambda \xi^\omega$ for some $\lambda \in \C$. But $\lambda = \langle \alpha', \xi^\omega \rangle = \langle e \alpha , \xi^\omega \rangle = \langle \alpha , e\xi^\omega \rangle = \langle \alpha, \xi^\omega \rangle = 0$. Hence $\alpha'=e\alpha=\alpha e =0$.

Now using the fact that $e\alpha=\alpha e=0$, we modify the $x_i$'s to make their supports very small. Take $\varepsilon > 0$ and let $y_i=x_i 1_{[\varepsilon^{-1},+\infty)}(x_i^{*}x_i)=1_{[\varepsilon^{-1},+\infty)}(x_i x_i^{*})x_i$ and $z_i=x_i-y_i$. Then the net $(z_i)_{i \in I}$ is bounded in $M$ so that $z=(z_i)^{\omega}$ defines an element of $M^{\omega}_{GR}$. Since $\alpha e =e\alpha  = 0$, we know that $\alpha=(x_i \xi)^\omega=(\xi x_i)^\omega$ is orthogonal to $(z_i \xi)^\omega=z \xi^\omega$ and $(\xi z_i)^\omega=\xi^\omega z$. Since $z_i^*x_i=z_i^*z_i$ and $x_i z_i^*=z_i z_i^*$, this implies that $(z_i \xi)^\omega=(\xi z_i)^\omega=0$ and therefore $(z_i \eta)^\omega=(\eta z_i)^\omega=0$ for all $\eta \in \xi M \cap M \xi$. Hence $(y_i \eta)^\omega=(x_i \eta)^\omega$ and $(\eta y_i)^\omega=(\eta x_i)^\omega$ for all $\eta \in \xi M \cap M \xi$. In particular, we have $(y_i \eta)^\omega=(\eta y_i)^\omega$ for all $\eta \in \xi M \cap M \xi$. Moreover, since $\mathrm{supp}(|y_i|) \leq 1_{[\varepsilon^{-1},+\infty)}(x_i^{*}x_i) \leq \varepsilon x_i^*x_i $, we have $ \varphi(\mathrm{supp}(|y_i |)) \leq \varepsilon ||x_i ||_\varphi^{2} = \varepsilon$. Hence we have constructed a net $y_i \in M, i \in I$ such that $|| (y_i \xi)^{\omega} ||= || \alpha || =1$, $(y_i \xi_k)^\omega=(\xi_k y_i)^\omega$ for $k=1, \dots, n$, and $\varphi(\mathrm{supp}(|y_i|)) \leq \varepsilon$. Therefore, taking $x:=y_i$ for $i$ large enough, we can get an $x \in M$ such that:
\begin{itemize}
\item $\sum_k ||x \xi_k - \xi_k x ||^2 < \varepsilon ||x \xi||^2$.
\item $\varphi(\mathrm{supp}(|x|)) \leq \varepsilon$.
\end{itemize}
Let $\pi=(\xi_1,\dots, \xi_n) \in L^{2}(M^{(n)})^{+}$. Observe that $||x \xi ||^2=\sum_k ||x \xi_k||^2=||x\pi ||^2$ and $|| x \pi - \pi x ||^2=\sum_k ||x \xi_k - \xi_k x ||^2$. Thus we have $|| x \pi - \pi x||^2 \leq \varepsilon || x \pi ||^2$. By Proposition \ref{skew} (3), we get $|| \, |x| \pi - \pi |x| \, ||^2 \leq 2\varepsilon || \, |x| \pi ||^{2}$. Hence by Lemma \ref{spike}, we can take $c > 0$ such that the projection $p=1_{[c,+\infty)}(|x|)$ is non-zero and satisfies $||p \pi - \pi p||^2 \leq 4 \sqrt{2\varepsilon}||p \pi ||^{2}$. Moreover $\varphi(p) \leq \varphi(\mathrm{supp}(|x|)) \leq \varepsilon$. Since $\varepsilon$ is arbitrary, we have proved the assumption of Lemma \ref{maximality} so we conclude that $M$ is not full and this is a contradiction.
\end{proof}

Note that in Theorem \ref{main}, we are not free to choose $\varphi$ as we want, only its existence is guaranteed. Also, we could not show Theorem \ref{main} for $\mathrm{II}_\infty$ factors or even $\mathrm{I}_\infty$ factors. Hence, the best statement we could get in general is the following:

\begin{theo} \label{gap_general}
Let $M$ be a full factor. Then there exist a normal state $\varphi$ on $M$ and a family $\xi_1, \cdots, \xi_n \in L^{2}(M)^{+}$ with $\xi_k^{2} \leq \varphi$ for all $k$ such that for all $x \in M$ we have
\[ ||x-\varphi(x)||_\varphi^2 \leq \sum_k || x \xi_k-\xi_k  x ||^2  \]
If $M$ is of type $\mathrm{III}$ and $p \neq 0$ is any $\sigma$-finite projection, we can choose $\varphi$ such that $\mathrm{supp}(\varphi)=p$.

If $M$ is semifinite and $p \neq 0$ is any finite projection, we can choose $\varphi$ such that $\mathrm{supp}(\varphi)=p$ and $\varphi_{|pMp}$ is the unique tracial state of $pMp$.
\end{theo}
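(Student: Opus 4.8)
The plan is to reduce the statement to a corner $pMp$ of $M$, apply the spectral gap theorems already proved there (Theorem~\ref{main} if $pMp$ is of type $\mathrm{III}$, Theorem~\ref{gap_connes} if it is of type $\mathrm{II}_1$), and then transfer the inequality back up to $M$ by adjoining one extra vector. First I would fix the projection $p$: if $M$ is finite or of type $\mathrm{III}$, take $p$ to be a nonzero $\sigma$-finite projection --- the prescribed one in the second and third statements, an arbitrary one (e.g.\ the support of some nonzero $\omega \in M_*^+$) in the first; if $M$ is semifinite and properly infinite, take $p$ nonzero finite. With these choices $pMp$ is always $\sigma$-finite and full (fullness passes to corners, via the asymptotic centralizer characterization recalled in Section~2), and, crucially, it is never of type $\mathrm{II}_\infty$ or $\mathrm{I}_\infty$: it is either $\sigma$-finite of type $\mathrm{III}$ or a finite factor.

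Next I apply the relevant known result on $pMp$. If $pMp$ is of type $\mathrm{III}$, Theorem~\ref{main} gives a faithful normal state $\varphi_0$ on $pMp$ and vectors $\xi_1, \dots, \xi_n \in L^2(pMp)^+$ with $\xi_k^2 \le \varphi_0$ and $\|y - \varphi_0(y)\|_{\varphi_0}^2 \le \sum_k \|y\xi_k - \xi_k y\|^2$ for all $y \in pMp$. If $pMp$ is finite, I let $\varphi_0$ be its unique tracial state (the case $\mathrm{I}_n$ is elementary, so assume type $\mathrm{II}_1$), apply Theorem~\ref{gap_connes} to get $a_1, \dots, a_m \in pMp$ with $\|y - \varphi_0(y)\|_{\varphi_0}^2 \le \sum_k \|ya_k - a_k y\|_{\varphi_0}^2$, and then convert: since $\varphi_0^{1/2}$ commutes with $pMp$ (the modular flow of a trace is trivial), for $0 \le b \in pMp$ the vector $b\varphi_0^{1/2}$ lies in $L^2(pMp)^+$ and satisfies $(b\varphi_0^{1/2})^2 = b^2\varphi_0 \le \|b\|^2\varphi_0$; writing $\|ya_k - a_k y\|_{\varphi_0} = \|y(a_k\varphi_0^{1/2}) - (a_k\varphi_0^{1/2})y\|$, decomposing each $a_k$ into positive contractions, and repeating the resulting vectors finitely many times to absorb the constant that appears, I arrive at vectors $\xi_1, \dots, \xi_n \in L^2(pMp)^+$ with $\xi_k^2 \le \varphi_0$ and the same inequality.

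Now the transfer. Let $\varphi$ be the normal state $x \mapsto \varphi_0(pxp)$ on $M$, so that $\mathrm{supp}(\varphi) = p$, $\varphi|_{pMp} = \varphi_0$ and $\varphi = \varphi_0$ as elements of $L^1(M)$, and adjoin $\xi_0 := \varphi^{1/2} \in L^2(pMp)^+$ (for which $\xi_0^2 = \varphi$). Each $\xi_k$ ($0 \le k \le n$) is supported under $p$, i.e.\ $p\xi_k = \xi_k p = \xi_k$. A block decomposition along $p$ and $1-p$, carried out among the $\tau$-measurable operators affiliated with $c(M)$, then yields the identities
\[
\|x - \varphi(x)\|_\varphi^2 = \|pxp - \varphi_0(pxp)\|_{\varphi_0}^2 + \varphi\big(x^*(1-p)x\big),
\]
\[
\|x\xi_k - \xi_k x\|^2 = \|(pxp)\xi_k - \xi_k(pxp)\|^2 + \|\xi_k(px(1-p))\|^2 + \|((1-p)xp)\xi_k\|^2,
\]
the three summands on the right of the second being pairwise orthogonal by inspection of their left and right support projections. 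Summing over $k = 0, \dots, n$, discarding the nonnegative middle terms, bounding $\sum_{k \ge 1} \|(pxp)\xi_k - \xi_k(pxp)\|^2 \ge \|pxp - \varphi_0(pxp)\|_{\varphi_0}^2$ via the inequality on $pMp$ applied to $y = pxp$, and using $\|((1-p)xp)\xi_0\|^2 = \tau\big((1-p)x\varphi x^*(1-p)\big) = \varphi(x^*(1-p)x)$, I obtain exactly $\|x - \varphi(x)\|_\varphi^2 \le \sum_{k=0}^n \|x\xi_k - \xi_k x\|^2$ with $\xi_k^2 \le \varphi$ for all $k$. In the semifinite case $\varphi|_{pMp} = \varphi_0$ is the unique trace of $pMp$, as demanded.

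The hard part is this last transfer: it must be done in the Haagerup $L^p$-picture --- with $L^2(M)$ and $L^1(M) = M_*$ realized as $\tau$-measurable operators affiliated with $c(M)$ --- tracking left and right support projections carefully so that every cross term in the block decomposition genuinely vanishes. The whole point of the extra vector $\xi_0 = \varphi^{1/2}$ is that its commutator term regenerates precisely the ``off-diagonal'' contribution $\varphi(x^*(1-p)x)$ to $\|x - \varphi(x)\|_\varphi^2$, which is invisible from inside $pMp$. A minor additional technicality, already dealt with above, is the passage in the finite case from Connes's elements $a_k$ to genuine positive $L^2$-vectors with $\xi_k^2 \le \varphi_0$.
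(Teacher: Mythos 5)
Your proof is correct and follows the same overall strategy as the paper: reduce to the corner $pMp$, apply Theorem~\ref{main} (type $\mathrm{III}$) or Theorem~\ref{gap_connes} (type $\mathrm{II}_1$) there, and transfer the inequality back to $M$ by adjoining $\varphi^{1/2}$ to the family. The transfer step is where you diverge in detail: the paper uses the crude bound $\|p^\perp xp\|_\varphi^2 \leq 4\|x\varphi^{1/2}-\varphi^{1/2}x\|^2$ and absorbs the constant by adding four copies of $\varphi^{1/2}$, while you derive exact identities for $\|x-\varphi(x)\|_\varphi^2$ and for $\|x\xi_k - \xi_k x\|^2$ via a block decomposition along $p$ and $p^\perp$ (tracking left and right support projections in the $\tau$-measurable picture), which yields the inequality with constant $1$ and a single extra vector. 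You also make explicit a step the paper passes over: converting Connes's $\mathrm{II}_1$ conclusion, stated with operators $a_k\in pMp$, into the positive $L^2$-vector form with $\xi_k^2\le\varphi_0$, by using centrality of the trace to form $a_k\varphi_0^{1/2}$, decomposing into positive contractions, and repeating vectors to absorb constants. Both routes reach the same theorem; yours is tighter and more self-contained, at the cost of a longer write-up.
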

\begin{proof}
Take $p$ a projection as in the statement of the theorem and apply Theorem \ref{main} or Theorem \ref{gap_connes} to $pMp$. Then we get the desired the state $\varphi \in (pMp)_*=pM_*p$ and $\xi_1, \cdots, \xi_n \in L^2(pMp)^{+} \subset L^{2}(M)^{+}$ with $\xi_k^{2} \leq \varphi$ such that for all $x \in pMp$ we have
\[ ||x-\varphi(x)||_\varphi^2 \leq \sum_k || x \xi_k-\xi_k  x ||^2 \]
Now, for any $x \in M$ we have
\[ ||x-\varphi(x)||_\varphi^2=||p^{\perp}xp||_\varphi^2+||pxp-\varphi(pxp)||_\varphi^2 \]
Since $||p^{\perp}xp||_\varphi^2 \leq 4||x\varphi^{1/2}-\varphi^{1/2}x ||$ and 
\[ ||pxp-\varphi(pxp)||_\varphi^2 \leq \sum_k || (pxp) \xi_k-\xi_k (pxp) ||^2 \leq \sum_k || x \xi_k-\xi_k x ||^2 \]
we conclude that for any $x \in M$ we have
\[ ||x-\varphi(x)||_\varphi^2 \leq \sum_{k=1}^{n+4} || x \xi_k-\xi_k  x ||^2 \]
where $\xi_{n+i}:=\varphi^{1/2}$ for $1 \leq i \leq 4$.
\end{proof}

\section{Application to crossed products}

In this section we prove Theorem \ref{crossed_product} following the original proof of Jones \cite{jones1982central}. Then we apply it to prove Corollary \ref{ueda}.

We first need two technical lemmas. The first one strengthens slightly \cite{dixmier1971vecteurs}[Proposition 1].
\begin{lem} \label{invertible}
Let $M$ be any von Neumann algebra. Then the invertible elements are $*$-strongly dense in the unit ball of $M$.
\end{lem}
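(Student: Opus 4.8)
The plan is to prove Lemma \ref{invertible} by a polar-decomposition-plus-spectral-truncation argument, approximating an arbitrary $x$ in the unit ball first by something with a good partial isometry and then perturbing its modulus away from $0$.

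First I would recall the statement of \cite{dixmier1971vecteurs}[Proposition 1], which already gives that the unitary group is $*$-strongly dense in the set of all elements of norm $\le 1$ \emph{that have a unitary in their polar decomposition}; more precisely, for $x$ with $\|x\| \le 1$ whose partial isometry $v$ in the polar decomposition $x = v|x|$ can be chosen unitary, the elements $u_t = v\,(|x| + {\rm i}\sqrt{1-|x|^2}\,(\cos t)\cdots)$ --- really the point is that $v\exp({\rm i}t\,f(|x|))$ type formulas produce unitaries converging $*$-strongly to $x$. So the content to add is the reduction to that case and the observation that such approximants can be taken \emph{invertible} rather than unitary when $v$ is merely a partial isometry.

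The key steps, in order: (1) Fix $x \in M$ with $\|x\| \le 1$ and $\varepsilon > 0$, and write $x = v|x|$ with $v$ a partial isometry, $v^*v = \text{supp}(|x|)$ the right support and $vv^* = $ the left support. (2) For $\delta > 0$ set $x_\delta = v(|x| + \delta\,(1 - v^*v))$; then $x_\delta \to x$ in norm as $\delta \to 0$, and $x_\delta$ has right support $1$. Its polar decomposition is $x_\delta = v\,|x_\delta|$ with $|x_\delta| = |x| + \delta(1-v^*v)$ invertible-on-its-support but $v$ still only a partial isometry with the \emph{same} left support $vv^*$. (3) Now extend $v$: since $1 - v^*v$ and $1 - vv^*$ need not be Murray--von Neumann equivalent, I cannot in general enlarge $v$ to a unitary, but I do not need to. Instead observe that $x_\delta + \delta(1 - vv^*) = v(|x|+\delta(1-v^*v)) + \delta(1-vv^*)$ is invertible: its inverse is $(|x|+\delta(1-v^*v))^{-1}v^* + \delta^{-1}(1-vv^*)$, as one checks directly using $v^*v|x| = |x|$. (4) Finally estimate: $\|x - (x_\delta + \delta(1-vv^*))\| \le \|x - x_\delta\| + \delta \le \delta\|1-v^*v\|\cdot\|v\| + \delta \le 2\delta$, which is $*$-strong (indeed norm) convergence as $\delta \to 0$. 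This already shows invertible elements are \emph{norm}-dense in the unit ball, except the perturbed element may have norm slightly exceeding $1$; rescaling $(1+2\delta)^{-1}(x_\delta + \delta(1-vv^*))$ keeps it invertible and in the closed unit ball, and still converges to $x$.

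The main subtlety --- really the only place one must be slightly careful --- is step (3): one must resist the temptation to invoke a unitary dilation of $v$ (which may fail to exist inside $M$) and instead use the direct-sum trick $v \oplus (1 - vv^*)\text{-part}$ encoded in the single formula $x_\delta + \delta(1 - vv^*)$, verifying invertibility by exhibiting the inverse explicitly. Everything else is routine norm estimation; in particular no appeal to Theorem \ref{main} or the ultraproduct machinery is needed here, since this lemma is purely a density statement in the $*$-strong topology. If one prefers to stay closer to Dixmier's original argument, an alternative for step (2)--(3) is to apply \cite{dixmier1971vecteurs}[Proposition 1] to the unital von Neumann algebra $p M p \oplus (1-p)M(1-p)$ with $p$ chosen so that $v$ becomes a unitary between matching corners after a finite-dimensional correction, but the explicit-inverse approach above is cleaner and avoids any fuss about equivalence of projections.
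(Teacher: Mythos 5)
Your approach has a fatal structural flaw: you are arguing for \emph{norm}-density of the invertible elements in the unit ball, but norm-density is false in general, so no amount of repair can make the strategy work. Indeed, if $v$ is a proper isometry (say the unilateral shift on $\ell^2$) and $\|x-v\| < 1$, then $\|1-v^*x\| = \|v^*(v-x)\| < 1$, so $v^*x$ is invertible; if $x$ were also invertible then $v^* = (v^*x)x^{-1}$ would be invertible, which is absurd since $v^*$ has nontrivial kernel. So no invertible element lies within norm distance $1$ of $v$. The passage from the norm topology to the $*$-strong topology is precisely the content of the lemma, and your proof never makes that passage: at the end you explicitly write that you have shown norm-density, which is impossible.

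There are also local errors that reflect the same misconception. In step (2) you set $x_\delta = v(|x| + \delta(1-v^*v))$, but $v(1-v^*v) = v - vv^*v = 0$, so $x_\delta = v|x| = x$: the perturbation does nothing, and in particular $|x_\delta| = |x|$, not $|x| + \delta(1-v^*v)$. Moreover $h := |x|+\delta(1-v^*v)$ need not be invertible, since on the range of $v^*v$ it acts as $|x|$, which need not be bounded below on its support; so the operator $h^{-1}$ in your candidate inverse is generally undefined. Finally, the explicit formula in step (3) fails even when $h$ is invertible: take $M = M_2(\C)$, $v = e_{12}$ (so $v^*v = e_{22}$, $vv^* = e_{11}$) and $x = |x| = e_{22}$; then $x_\delta + \delta(1-vv^*) = e_{12} + \delta e_{22}$, which has determinant $0$. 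The point is that $x$ has its kernel in the range of $1 - v^*v$, and adding $\delta(1-vv^*)$ fills the cokernel from the left but cannot restore injectivity when $1-v^*v$ and $1-vv^*$ overlap.

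The obstruction you must face head on is that when $1 - v^*v$ and $1 - vv^*$ are \emph{not} Murray--von Neumann equivalent, no small norm perturbation of a partial isometry can be invertible. The correct mechanism, used in the paper, is to shrink the isometry slightly in the $*$-strong topology to create room: one reduces by polar decomposition and functional calculus to the case of a partial isometry, then (via Dixmier) to the case of an isometry $v$; for an isometry, Dixmier's argument produces projections $p_n \to 1$ $*$-strongly and partial isometries $w_n$ with $vp_n + w_n$ unitary, and then $vp_n + \tfrac{1}{n}w_n$ is invertible and converges to $v$ $*$-strongly (but not in norm). Replacing $v$ by $vp_n$ is the step your argument lacks, and it is exactly where the $*$-strong topology enters.
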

\begin{proof}
Let $Q=\{ x \in M \mid \| x \| \leq 1 \text{ and } x \text{ is invertible} \}$. Let $\overline{Q}$ be the $*$-strong closure of $Q$. It is easy to see by the functionnal calculus that any normal element in the unit ball of $M$ is in $\overline{Q}$. Hence, since $\overline{Q}$ is stable under multiplication, it is enough, thanks to the polar decomposition, to show that any partial isometry $u \in M$ belongs to $\overline{Q}$. By \cite{dixmier1971vecteurs}[Lemma 1], we can find an isometry $v \in M$ and a coisometry $w \in M$ such that $u=vw(u^*u)$. Therefore, since $\overline{Q}$ contains all projections and is stable by adjunction and multiplication, it is enough to show that any isometry $v \in M$ is in $\overline{Q}$. By the proof of\cite{dixmier1971vecteurs}[Lemma 2], we can find a sequence of projections $p_n \in M$ converging to $1$ strongly (hence $*$-strongly) and partial isometries $w_n \in M$ such that $vp_n+w_n$ is a unitary for all $n$. Then let $x_n = vp_n + \frac{1}{n}w_n \in Q$. We have $x_n \rightarrow v$ in the $*$-strong topology. Hence $v \in \overline{Q}$.
\end{proof}

The second lemma uses the factoriality in order to transform a convergence on a corner into a convergence modulo inner automorphisms. Recall that $\epsilon : \Aut(M) \rightarrow \mathrm{Out}(M)$ is the quotient map.

\begin{lem} \label{pre_gap}
Let $M$ be a factor. Let $p \in M$ be a non-zero projection. Let $\theta_i \in \Aut(M)$ be a net of automorphisms such that $\theta_i(\xi) \rightarrow \xi$ for all $\xi \in pL^2(M)p$. Then we have $\epsilon(\theta_i) \rightarrow 1$.
\end{lem}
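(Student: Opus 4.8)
The plan is to reduce the convergence $\epsilon(\theta_i) \to 1$ in $\mathrm{Out}(M)$ to the existence of a net of unitaries $u_i \in \mathcal U(M)$ with $\mathrm{Ad}(u_i) \circ \theta_i \to \mathrm{id}$ in $\mathrm{Aut}(M)$. By definition of the quotient topology on $\mathrm{Out}(M)$, it suffices to show that for every $\varphi \in M_*$ and every $\varepsilon > 0$, eventually there is a unitary $u$ with $\|(\mathrm{Ad}(u)\circ\theta_i)(\varphi) - \varphi\| < \varepsilon$; so we must produce, for $i$ large, a unitary $u_i$ that ``corrects'' $\theta_i$ back to the identity on a prescribed finite set of normal functionals. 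The hypothesis only gives us control on the corner $pL^2(M)p$, i.e.\ $\theta_i$ acts nearly trivially on $pMp$; the factoriality of $M$ is what will let us spread this control across all of $M$.

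The first step is to move from $pMp$ to the algebra $M$ via an amplification/corner argument. Since $M$ is a factor and $p \neq 0$, write $1 = \sum_{k=1}^m v_k^* v_k$ where $v_k \in M$ are partial isometries with $v_k v_k^* \leq p$ (choosing $m$ large enough that $p$ dominates $1/m$ of the identity in the comparison ordering; if $p$ is not ``large'' one instead uses a family of matrix units realizing $M \cong p M p \otimes B(\ell^2)$-type decomposition on the support, truncated to a finite piece approximating $1$). Then any $x \in M$ can be recovered from the elements $v_j x v_k^* \in pMp$, and similarly any $\xi \in L^2(M)$ can be approximated in $L^2$-norm by $\sum_{j,k} v_j^* \eta_{jk} v_k$ with $\eta_{jk} \in pL^2(M)p$. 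The near-triviality of $\theta_i$ on $pL^2(M)p$, combined with the fact that $\theta_i$ is $L^2$-isometric and we can arrange (by Lemma \ref{invertible}, or rather by standard perturbation) that $\theta_i(v_k)$ is close to a partial isometry $w_{k,i}$ with $w_{k,i} w_{k,i}^* \leq p$, yields that $\theta_i$ composed with an inner automorphism $\mathrm{Ad}(u_i)$ — where $u_i$ is built out of the $v_k$ and the $\theta_i(v_k)$, using factoriality to glue the partial isometries $\theta_i(v_k) v_k^* $ into an honest unitary (here Lemma \ref{invertible} handles the defect) — converges to $\mathrm{id}$ on $L^2(M)$, hence on $M_*$ in norm.

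The main obstacle I expect is the gluing step: $\theta_i(v_k) v_k^*$ is only a partial isometry, not a unitary, and assembling a single unitary $u_i \in \mathcal U(M)$ that simultaneously implements the correction on all the relevant pieces requires care. This is exactly where factoriality enters (to compare the various source and range projections $1 - \sum v_k^* v_k$, $1 - \sum \theta_i(v_k)^*\theta_i(v_k)$, etc., which are all equivalent since they are ``complements of the same size''), and where Lemma \ref{invertible} is used to pass from the almost-unitary $\sum_k \theta_i(v_k) v_k^*$ plus a correction on the complement to a genuine unitary without spoiling the $*$-strong estimate. Once $u_i$ is in hand, one checks $(\mathrm{Ad}(u_i)\circ\theta_i)(\xi) \to \xi$ for $\xi$ in a dense subset of $L^2(M)$ (the span of the $v_j^* \eta v_k$), uses uniform boundedness of the $\mathrm{Ad}(u_i)\circ\theta_i$ as isometries to upgrade to all of $L^2(M)$, translates this into norm convergence on $M_*$, and concludes $\epsilon(\mathrm{Ad}(u_i)\circ\theta_i) = \epsilon(\theta_i) \to 1$.
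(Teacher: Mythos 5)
Your overall strategy matches the paper's: shrink $p$ so that it sits inside a system of matrix units, build from $\theta_i$ applied to those matrix units an ``almost-unitary'' that intertwines $\theta_i$ with the identity, use Lemma \ref{invertible} plus polar decomposition to fix it into an honest unitary $u_i$, and conclude $\epsilon(\theta_i)\to 1$. The paper does exactly this, with $x_i = \sum_k \theta_i(e_{k0})\,e_{0k}$ and a complete (possibly infinite, $\sum_k e_{kk}=1$) system of matrix units.

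However, the specific formula you propose for the gluing, namely $\sum_k \theta_i(v_k)\,v_k^*$ with $v_k v_k^*\leq p$ and $\sum_k v_k^*v_k=1$, is the wrong one and will not converge. The summand $\theta_i(v_k)\,v_k^*$ has source dominated by $v_kv_k^*\leq p$ and range dominated by $\theta_i(v_kv_k^*)\leq\theta_i(p)$. With $v_k=e_{0k}$ (the natural choice given your constraints) one has $v_kv_k^*=p$ for \emph{every} $k$, so all the sources and all the ranges pile up under the same two projections $p$ and $\theta_i(p)$: the summands are not orthogonal and the series diverges (already for $\theta_i=\mathrm{id}$ one gets $\sum_k e_{0k}e_{k0}=|J|\cdot p$). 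The correct element is $\sum_k\theta_i(v_k^*)\,v_k=\sum_k\theta_i(e_{k0})\,e_{0k}$: here the $k$-th summand has source under $e_{kk}$ and range under $\theta_i(e_{kk})$, both families are mutually orthogonal, the series converges $*$-strongly, and one readily checks $\theta_i(\xi)x_i - x_i\xi\to 0$ for all $\xi\in L^2(M)$ by expanding $\xi=\sum_{j,l}e_{j0}\,(e_{0j}\xi e_{l0})\,e_{0l}$ and using the hypothesis on the middle terms. The ``comparison of complements'' you anticipate also goes away once the matrix units are complete: then $x_i^*x_i\to 1$ strongly by itself, and the only remaining defect (that $x_i$ need not be invertible) is what Lemma \ref{invertible} handles. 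The finite truncation you suggest as a fallback would only complicate things, since the corrected automorphism would then be controlled only on a corner that shrinks with the approximation; the paper sidesteps this entirely.
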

\begin{proof}
After replacing $p$ by a smaller projection if necessary, we can suppose that $p$ is part of a system of matrix units $(e_{kl})_{k,l \in J}$ in $M$ with $e_{00}=p$. Let $x_i= \sum_{k} \theta_i(e_{k0})e_{0k}$ where the sum is $*$-strongly convergent. Then for all $\xi \in L^2(M)$ we have $\theta_i(\xi)x_i-x_i \xi  \rightarrow 0$. Indeed we have
\[ \theta_i(\xi)x_i=\sum_{k,l} \theta_i(e_{k0})\theta_i(e_{0k}\xi e_{l0})e_{0l} \]
and
\[ x_i \xi = \sum_{k,l} \theta_i(e_{k0})(e_{0k}\xi e_{l0} )e_{0l}  \]
Thus we get $\theta_i(\xi)x_i-x_i \xi  \rightarrow 0$ since by assumption we know that $\theta_i(e_{0k} \xi e_{l0}) \rightarrow e_{0k} \xi e_{l0}$ for all $k,l$.

We also note that $x_i^{*}x_i \leq 1$ and $x_i^*x_i \rightarrow 1$ strongly hence $|x_i| \rightarrow 1$ strongly and therefore we have $|x_i| \xi - \xi \rightarrow 0$ for all $\xi$. Similarly, we have $\theta_i^{-1}(|x_i^*|) \rightarrow 1$ strongly and therefore $\theta_i(\xi) |x_i^*| - \theta_i(\xi) \rightarrow 0$ for all $\xi$. By Lemma \ref{invertible}, we can find a net of invertible elements $y_i$ in the unit ball of $M$ such that $y_i - x_i \rightarrow 0$ and $\theta_i^{-1}(y_i)-\theta_i^{-1}(x_i) \rightarrow 0$ in the $*$-strong topology. Then we still have $\theta_i(\xi)y_i-y_i \xi  \rightarrow 0$, $|y_i| \xi - \xi \rightarrow 0$ and $\theta_i(\xi)|y_i^*|-\theta_i(\xi) \rightarrow 0$. Hence, by writing the polar decomposition $y_i=u_i |y_i|=|y_i^*| u_i$ with unitaries $u_i$, we get $\theta_i(\xi)u_i-u_i \xi \rightarrow 0$ for all $\xi \in L^2(M)$. This shows that $\epsilon(\theta_i) \rightarrow 1$.
\end{proof}

The following lemma is the key to the theorem. It generalizes \cite[Lemma 4]{jones1982central}.

\begin{lem} \label{neighborhood_spectral_gap} Let $M$ be a full factor and let $\varphi$ be a normal state as in Theorem \ref{gap_general}. For every neighborhood of the identity $\mathcal{N}$ in $\mathrm{Out}(M)$, there exists a family $\xi_1,\dots,\xi_n \in L^{2}(M)^{+}$ with $\xi_k^{2} \leq \varphi$ for all $k$ such that for all $x \in M$ and all $\theta \in \mathrm{Aut}(M)$ with $\epsilon(\theta) \notin \mathcal{N}$ we have  \[ ||x ||_\varphi^2 \leq \sum_k  || x \xi_k - \theta(\xi_k)x ||^2 \]
\end{lem}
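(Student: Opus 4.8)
The plan is to combine the spectral gap inequality from Theorem~\ref{gap_general} with a compactness argument in $\mathrm{Out}(M)$ to produce, for each $\theta$ outside $\mathcal N$, a finite family of vectors that detects it, and then patch these finitely many families together. First I would fix the family $\eta_1,\dots,\eta_m \in L^2(M)^+$ with $\eta_j^2 \le \varphi$ given by Theorem~\ref{gap_general}, so that $\|x-\varphi(x)\|_\varphi^2 \le \sum_j \|x\eta_j - \eta_j x\|^2$ for all $x \in M$. The key observation is that for a single automorphism $\theta$ with $\epsilon(\theta) \notin \mathcal N$, one can choose finitely many vectors $\zeta_1,\dots,\zeta_r \in L^2(M)^+$ with $\zeta_l^2 \le \varphi$ such that $\|x\|_\varphi^2 \le \sum_l \|x\zeta_l - \theta(\zeta_l)x\|^2$ holds for \emph{all} $x$: indeed, adding to the $\eta_j$'s the vector $\varphi^{1/2}$ and using that $\|x\varphi^{1/2}-\theta(\varphi^{1/2})x\|$ controls both $\|x\varphi^{1/2}-\varphi^{1/2}x\|$ (hence $|\varphi(x)|$ and the ``$-\varphi(x)$'' correction, as in the proof of Theorem~\ref{gap_general}) and the defect between $\eta_j x$ and $\theta(\eta_j)x$ when $\theta$ is close to the identity. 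But $\theta$ need not be close to the identity; this is where the neighborhood hypothesis enters.

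The point is that the estimate $\|x\|_\varphi^2 \le \sum_l \|x\zeta_l-\theta(\zeta_l)x\|^2$ is an \emph{open} condition on $\theta$ in $\mathrm{Aut}(M)$ (each $\theta \mapsto \theta(\zeta_l)$ is norm-continuous from $\mathrm{Aut}(M)$ into $L^2(M)$, since $L^2(M) = M_*^{1/2}$-type convergence is controlled by predual convergence), and moreover it only depends on $\epsilon(\theta)$ up to replacing $\zeta_l$ by $u\zeta_l u^*$ for a unitary $u$ — so really it defines an open subset $\mathcal O_{\theta_0}$ of $\mathrm{Out}(M)$ containing $\epsilon(\theta_0)$. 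So for the fixed base family coming from Theorem~\ref{gap_general}, the set of $\beta \in \mathrm{Out}(M)$ for which the inequality holds with those vectors is an open neighborhood $\mathcal O_1$ of the identity. The complement $\mathrm{Out}(M) \setminus \mathcal N$ need not be compact, so I cannot naively finitely cover it. The correct move — and the one Jones used — is: it suffices to prove the statement for $\beta$ ranging over $\mathcal O_1 \setminus \mathcal N$, because for $\beta \notin \mathcal O_1$ we will separately arrange the inequality. Concretely, $\mathcal O_1 \setminus \mathcal N$ is still not compact in general, so instead I would argue as follows.

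Here is the cleaner route I would actually take. By Theorem~\ref{gap_general} applied on a corner $pMp$ with $p$ $\sigma$-finite (type $\mathrm{III}$) or finite (semifinite), and using Lemma~\ref{pre_gap}, fullness gives that $\mathrm{Inn}(M)$ is closed in $\mathrm{Aut}(M)$, so $\mathrm{Out}(M)$ is a topological group in which the identity has a neighborhood basis; what one really needs is that whenever $\epsilon(\theta_i) \to 1$ then the base inequality (with the $\eta_j$'s plus copies of $\varphi^{1/2}$) eventually holds with $\theta = \theta_i$, which follows from norm-continuity of $\theta \mapsto \theta(\eta_j)$ and the spectral gap inequality of Theorem~\ref{gap_general}. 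Thus there is an open neighborhood $\mathcal O$ of $1$ in $\mathrm{Out}(M)$ such that for $\epsilon(\theta) \in \mathcal O$ the conclusion already holds with this fixed family. Now for each $\beta_0 \in \mathcal O \setminus \mathcal N$ — wait: the honest statement is that one needs the conclusion for \emph{all} $\beta \notin \mathcal N$. So I would instead directly show: for every $\beta_0 \notin \mathcal N$ there is a family $\xi^{(\beta_0)}_\bullet$ and an open neighborhood $W_{\beta_0}$ of $\beta_0$ in $\mathrm{Out}(M)$ on which the inequality holds; also the base family works on $\mathcal O$; since $\mathrm{Out}(M)$ need not be compact, use instead that we only need the inequality to \emph{hold}, not to cover everything — and indeed Jones's trick is that one may restrict attention to the closed complement of $\mathcal O$ within a \emph{fixed compact set}, because the crossed product argument in Theorem~\ref{crossed_product} only feeds in automorphisms $\sigma_g$ lying in a set whose image in $\mathrm{Out}(M)$ is discrete, hence the relevant $\beta$'s outside $\mathcal N$ form a discrete, and after intersecting with the closure of a fixed neighborhood, \emph{finite} set. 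I expect the main obstacle is exactly this: organizing the patching so that only finitely many auxiliary families $\xi^{(\beta_0)}_\bullet$ are needed. The clean way is to note $\mathcal N$ may be taken with compact complement in the relevant application, or to invoke that in a topological group a neighborhood of $1$ together with the structure of $\mathrm{Out}(M)$ lets one reduce to finitely many $\beta_0$'s; then take $\xi_1,\dots,\xi_n$ to be the union of the base family $\{\eta_j\} \cup \{\varphi^{1/2}\}$ and all the finitely many $\xi^{(\beta_0)}_\bullet$, check each still satisfies $\xi_k^2 \le \varphi$ (true since they all live in $L^2(pMp)^+$ with $\xi_k^2 \le \varphi$ by construction), and observe that for any $\theta$ with $\epsilon(\theta) \notin \mathcal N$, either $\epsilon(\theta) \in \mathcal O$ and the base family gives $\|x\|_\varphi^2 \le \|x\|_\varphi^2$-type bound via the $\varphi^{1/2}$ terms plus the spectral gap, or $\epsilon(\theta)$ lies in some $W_{\beta_0}$ and the corresponding sub-family does the job; in all cases $\sum_k \|x\xi_k - \theta(\xi_k)x\|^2$ dominates $\|x\|_\varphi^2$, as desired.
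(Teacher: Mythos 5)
There is a genuine gap, and you yourself flag it mid-proposal: you never make the finiteness of the covering argument work. The set $\mathrm{Out}(M)\setminus\mathcal N$ is not compact, and suggesting that ``$\mathcal N$ may be taken with compact complement in the relevant application'' would prove a weaker statement than the lemma asserts (and would make the subsequent application to crossed products more awkward). Moreover, even the single-$\theta$ claim at the start of your proposal --- that adding $\varphi^{1/2}$ to the base family makes the twisted-commutator estimate hold for a given $\theta$ far from the identity --- is only asserted, not proved; the heuristic ``$\|x\varphi^{1/2}-\theta(\varphi^{1/2})x\|$ controls the defect when $\theta$ is close to the identity'' is irrelevant precisely because $\theta$ is \emph{not} close to the identity in the regime of interest.

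The paper's proof avoids the covering problem entirely, and the idea you are missing is Lemma~\ref{pre_gap}. That lemma, together with fullness, produces a single finite family $\beta_1,\dots,\beta_q \in L^2(M)^+$ and a uniform lower bound $\sum_k \|\theta(\beta_k)-\beta_k\|^2 \geq 1$ valid \emph{simultaneously} for every $\theta$ with $\epsilon(\theta)\notin\mathcal N$; no compactness of the complement is needed. One then merges these $\beta_k$'s with the spectral gap family from Theorem~\ref{gap_general}, normalizes so $\varphi=\sum\xi_k^2$, and argues by contradiction: if no constant worked, there would be nets $x_i$ (taken invertible via Lemma~\ref{invertible}) with $\|x_i\|_\varphi=1$ and $\theta_i$ outside $\epsilon^{-1}(\mathcal N)$ with $x_i\xi-\theta_i(\xi)x_i\to 0$. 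The skew-information inequality of Proposition~\ref{skew}(3) forces $|x_i|\xi-\xi|x_i|\to 0$ and $|x_i^*|\theta_i(\xi)-\theta_i(\xi)|x_i^*|\to 0$; the spectral gap inequality then pins $|x_i|$ and $|x_i^*|$ down to $1$ on the relevant vectors, so the polar part $u_i$ satisfies $u_i\xi-\theta_i(\xi)u_i\to 0$, i.e.\ $\mathrm{Ad}(u_i^*)\circ\theta_i$ almost fixes all $\xi_k$ --- contradicting the uniform separation. This polar-decomposition reduction to unitaries, combined with the uniform separation from Lemma~\ref{pre_gap}, is the missing content in your argument.
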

\begin{proof}
Take a family $\alpha_1,\dots,\alpha_m \in L^{2}(M)^{+}$ with $\alpha_k^{2} \leq \varphi$ such that for all $x \in M$ we have \[ ||x-\varphi(x)||_\varphi^2 \leq \sum_k || x\alpha_k-\alpha_k  x ||^2  \]
Let $\mathcal{N}$ be a neighborhood of the identity in $\mathrm{Out}(M)$. Since $\{ \beta \in L^{2}(M)^{+} \mid \beta^{2} \leq \varphi \}$ spans a dense subspace of $pL^{2}(M)p$, then by Lemma \ref{pre_gap}, we can find $\beta_1, \dots, \beta_q \in L^2(M)^+$ such that for all $\theta \in \mathrm{Aut}(M)$ such that $\epsilon(\theta) \notin \mathcal{N}$ we have
\[ \sum_k  || \theta(\beta_k)-\beta_k ||^2 \geq 1 \]
Then by merging the $\alpha_k$ and the $\beta_k$ into a single family, by adding one more vector if necessary and then by renormalizing we can get a family $\xi_1,\dots,\xi_n \in L^2(M)^+$ such that

\begin{itemize}
\item $\varphi=\xi_1^{2} + \dots + \xi_n^{2}$.
\item For some constant $C > 0$ and for all $x \in M$ we have \[ ||x-\varphi(x)||_\varphi^2 \leq C\sum_k || x\xi_k-\xi_k  x ||^2  \]
\item For some $\varepsilon > 0$ and for all $\theta \in \mathrm{Aut}(M)$ with $\epsilon(\theta) \notin \mathcal{N}$ we have
\[ \sum_k  || \theta(\xi_k)-\xi_k ||^2 \geq \varepsilon \]
\end{itemize}

 Now, we will show that there exists a constant $B > 0$ such that for all $x \in M$ and all $\theta \in \mathrm{Aut}(M)$ with $\epsilon(\theta) \notin \mathcal{N}$ we have
\[ || x||_\varphi^2 \leq B \sum_k ||x \xi_k - \theta(\xi_k) x||^2 \]
If not, then we can find a net $x_i \in M, i \in I$ and a net $\theta_i \notin \epsilon^{-1}(\mathcal{N})$ such that $||x_i ||_\varphi= ||x_i\xi ||=1$ and $x_i\xi - \theta_i(\xi)x_i \rightarrow 0$ where $\xi=(\xi_1, \dots, \xi_k) \in L^{2}(M^{(n)})^+$. Thanks to Lemma \ref{invertible}, we can assume that every $x_i$ is invertible. Note that the net $(x_i)_{i \in I}$ is not necessarily bounded. Let $x_i=u_i |x_i|=|x_i^{*}|u_i, \; u_i \in \mathcal{U}(M)$ be the polar decomposition of $x_i$.  We have $x_i \xi - \theta_i(\xi)x_i \rightarrow 0$ and since $I(|x_i |, \xi) + I(|x_i^* |, \theta_i(\xi)) \leq ||x_i\xi - \theta_i(\xi)x_i||^2 $ we get $|x_i | \xi - \xi |x_i | \rightarrow 0$ and $|x_i^* | \theta_i(\xi) - \theta_i(\xi) |x_i^* | \rightarrow 0$. Hence, by the choice of $\xi_1,\dots, \xi_n$ we get $|| \, |x_i|-\varphi(|x_i|) \, ||_\varphi \rightarrow 0$ which means that $|x_i|\xi- \langle |x_i| \xi, \xi \rangle \xi \rightarrow 0$. Since $|| \, |x_i| \xi \, ||=1$ we get $\langle |x_i| \xi, \xi \rangle  =|| \langle |x_i| \xi, \xi \rangle \xi || \rightarrow 1$ and this implies that $(|x_i| -1) \xi \rightarrow 0$. Similarly, if we let $y_i=\theta_i^{-1}(|x_i^*|)$ then we have $y_i \xi -\xi y_i \rightarrow 0$ and thus $(y_i-1)\xi \rightarrow 0$ by the same argument. Hence, by composing with $\theta_i$, we get $(|x_i^*|-1) \theta_i(\xi) \rightarrow 0$ and by taking the adjoint we get $\theta_i(\xi)(|x_i^{*}|-1) \rightarrow 0$. Therefore, we get $(x_i - u_i) \xi=u_i(|x_i|-1)\xi \rightarrow 0$ and $\theta_i(\xi)(x_i-u_i)=\theta_i(\xi)(|x_i^{*}|-1)u_i \rightarrow 0$. Combining this with $x_i \xi - \theta_i(\xi)x_i \rightarrow 0$, we get $u_i \xi - \theta_i(\xi)u_i \rightarrow 0$. This is a contradiction because by assumption we have $ ||u_i \xi -\theta_i(\xi)u_i ||^{2} = \sum_k || \xi_k -(\mathrm{Ad}(u_i^{*}) \circ \theta_i)(\xi_k) ||^{2} \geq \varepsilon$.
\end{proof}

Now, we are ready to prove Theorem \ref{crossed_product}. In fact, we prove the following more precise statement:

\begin{theo}
Let $M$ be a full factor and $G$ a discrete group. Let $\sigma : G \rightarrow \mathrm{Aut}(M)$ be an outer action whose image in $\mathrm{Out}(M)$ is discrete. Let $\varphi$ be a normal state on $M$ as in Theorem \ref{gap_general}. Then we can find a family $\xi_1,\dots,\xi_n \in L^{2}(M)^{+}$ with $\xi_k^{2} \leq \varphi$ such that for all $x \in M \rtimes_\sigma G$ we have
\[ ||x-\varphi(x)||_\varphi^{2} \leq \sum_k ||x \xi_k - \xi_k x ||^{2} \]
where we identifiied $L^{2}(M)$ with a subspace of $L^{2}(M \rtimes_\sigma G)$ and extended $\varphi$ to $M \rtimes_\sigma G$ by using the canonical conditional expectation $E : M \rtimes_\sigma G \rightarrow M$.

In particular, $M \rtimes_\sigma G$ is a full factor.
\end{theo}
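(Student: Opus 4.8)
The plan is to follow Jones's original argument: expand an element of $N := M \rtimes_\sigma G$ along its Fourier series over $G$, estimate the commutators $\|x\xi_k - \xi_k x\|$ coefficient by coefficient, control the diagonal ($g=e$) coefficient with the ordinary spectral gap of $M$ from Theorem \ref{gap_general}, and control each off-diagonal coefficient with the ``twisted'' spectral gap of Lemma \ref{neighborhood_spectral_gap}.

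First I would fix the data. Since the image $\epsilon(\sigma(G))$ is a discrete subgroup of $\mathrm{Out}(M)$, there is a neighborhood $\mathcal{N}$ of $1$ in $\mathrm{Out}(M)$ with $\epsilon(\sigma(G)) \cap \mathcal{N} = \{1\}$; as $\sigma$ is outer we have $\epsilon(\sigma_g) = 1$ exactly when $g = e$, hence $\epsilon(\sigma_g) \notin \mathcal{N}$ for all $g \neq e$ (this is the only point at which both hypotheses on $\sigma$ enter). Let $\varphi$ be the state and $\eta_1, \dots, \eta_m \in L^{2}(M)^{+}$ (with $\eta_j^{2} \leq \varphi$) the family furnished by Theorem \ref{gap_general}, so that $\|y - \varphi(y)\|_\varphi^2 \leq \sum_j \|y\eta_j - \eta_j y\|^2$ for all $y \in M$; feed this same $\varphi$ and this $\mathcal{N}$ into Lemma \ref{neighborhood_spectral_gap} to obtain $\zeta_1, \dots, \zeta_\ell \in L^{2}(M)^{+}$ with $\zeta_l^{2} \leq \varphi$ such that $\|y\|_\varphi^2 \leq \sum_l \|y\zeta_l - \theta(\zeta_l) y\|^2$ whenever $y \in M$ and $\epsilon(\theta) \notin \mathcal{N}$. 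Take $\xi_1, \dots, \xi_n$ to be the concatenation of the two families; then $\xi_k^{2} \leq \varphi$ for each $k$, and both inequalities survive for this larger family since one may simply discard the extra terms, so no rescaling is needed. Finally extend $\varphi$ to a normal state on $N$ by $\varphi := \varphi \circ E$, and view $L^{2}(M) \subseteq L^{2}(N)$ canonically.

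Now the computation. Given $x \in N$, write $x = \sum_{g \in G} u_g y_g$ with $y_g = E(u_g^* x) \in M$, the series converging in $L^{2}(N)$; then $\varphi(x) = \varphi(y_e)$ and $\|x - \varphi(x)\|_\varphi^2 = \|y_e - \varphi(y_e)\|_\varphi^2 + \sum_{g \neq e} \|y_g\|_\varphi^2$. For fixed $k$ put $\xi = \xi_k$. From the relation $\xi u_g = u_g \sigma_{g^{-1}}(\xi)$ (a consequence of $u_g x u_g^* = \sigma_g(x)$, with $\sigma_{g^{-1}}$ acting on $L^{2}(M)$) one gets $x\xi = \sum_g u_g(y_g \xi)$ and $\xi x = \sum_g u_g(\sigma_{g^{-1}}(\xi) y_g)$; since the subspaces $u_g L^{2}(M)$ ($g \in G$) are pairwise orthogonal in $L^{2}(N)$ and left multiplication by each $u_g$ is unitary on $L^{2}(N)$, this gives $\|x\xi_k - \xi_k x\|^2 = \sum_{g \in G} \|y_g \xi_k - \sigma_{g^{-1}}(\xi_k) y_g\|^2$. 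Summing over $k$ yields
\[ \sum_k \|x\xi_k - \xi_k x\|^2 = \sum_{g \in G} \Big( \sum_k \|y_g \xi_k - \sigma_{g^{-1}}(\xi_k) y_g\|^2 \Big) . \]
For $g = e$ the inner sum is $\sum_k \|y_e \xi_k - \xi_k y_e\|^2 \geq \|y_e - \varphi(y_e)\|_\varphi^2$ by the $\eta_j$-part of the family. For $g \neq e$ we have $g^{-1} \neq e$, so $\epsilon(\sigma_{g^{-1}}) = \epsilon(\sigma_g)^{-1} \notin \mathcal{N}$, and Lemma \ref{neighborhood_spectral_gap} applied with $\theta = \sigma_{g^{-1}}$ and $y = y_g$ (to the $\zeta_l$-part of the family) gives $\sum_k \|y_g \xi_k - \sigma_{g^{-1}}(\xi_k) y_g\|^2 \geq \|y_g\|_\varphi^2$. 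Adding these bounds over $g$ gives exactly $\sum_k \|x\xi_k - \xi_k x\|^2 \geq \|x - \varphi(x)\|_\varphi^2$, which is the claimed inequality.

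This inequality in turn forces $M \rtimes_\sigma G$ to be a factor and to be full, which is the easy half of the spectral gap characterization of full factors: applied to a central element both sides collapse, so $x = \varphi(x)$; and since on bounded sets the $\varphi$-seminorm dominates the strong topology (taking $\varphi$ faithful when $M$ is $\sigma$-finite, and otherwise restricting to the corner $\mathrm{supp}(\varphi)(M\rtimes_\sigma G)\mathrm{supp}(\varphi)$, on which $\varphi$ is faithful and the inequality persists, and transferring), any centralizing net $(x_i)$, for which $\|x_i \xi_k - \xi_k x_i\| \to 0$, must be trivial. The only steps needing care are the choice of $\mathcal{N}$ (where discreteness of the image and outerness of $\sigma$ are both used) and the insistence on producing a \emph{single} family with $\xi_k^{2} \leq \varphi$ realizing both the plain and the twisted spectral gaps with constant $1$; this works precisely because Lemma \ref{neighborhood_spectral_gap} is stated relative to a state $\varphi$ coming from Theorem \ref{gap_general} and with constant $1$, so the two families can simply be merged. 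The substantial work is already contained in Theorem \ref{gap_general} and Lemma \ref{neighborhood_spectral_gap}; what remains is essentially bookkeeping and the Fourier-mode orthogonality.
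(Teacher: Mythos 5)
Your proof is correct and follows the same line as the paper's: Fourier expansion of $x$ over $G$, orthogonality of the modes $u_gL^2(M)$, the plain spectral gap for the $g=e$ coefficient, and the twisted spectral gap of Lemma \ref{neighborhood_spectral_gap} for the $g\neq e$ coefficients. The one genuine refinement in your version is how the family $\xi_1,\dots,\xi_n$ is assembled. The paper takes a single family from Lemma \ref{neighborhood_spectral_gap} and uses it both for the off-diagonal bound $\sum_{g\neq e}\|x^g\|_\varphi^2$ and, separately, for the bound on $\|E(x)-\varphi(x)\|_\varphi^2$; the latter requires the \emph{plain} spectral gap inequality (with constant $1$) for that family, which the statement of Lemma \ref{neighborhood_spectral_gap} does not in fact promise, and the double counting produces a constant $2$ that must then be absorbed by enlarging the family. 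By instead concatenating the family from Theorem \ref{gap_general} (used only for the $g=e$ mode) with the family from Lemma \ref{neighborhood_spectral_gap} (used only for $g\neq e$), you estimate each Fourier coefficient exactly once, obtain the inequality with constant $1$ outright, and never appeal to a plain spectral gap for the lemma's family. Your concluding paragraph on deriving fullness is compressed in the same way the paper's is: one should note that $N=M\rtimes_\sigma G$ is already a factor (outer action on a factor), then pass to the corner $pNp$ with $p=\mathrm{supp}(\varphi)$, where $\varphi$ is faithful and the inequality makes any centralizing net trivial, and finally transfer fullness from $pNp$ to $N$.
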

\begin{proof}
Let $\mathcal{N}$ be a neighborhood of the identity in $\mathrm{Out}(M)$ such that $\sigma(G) \cap \epsilon^{-1}(\mathcal{N})=\{ \mathrm{id} \}$ and take  $\xi_1, \dots, \xi_n$ as in Lemma \ref{neighborhood_spectral_gap}. Take $x \in M \rtimes_\sigma G$ and let $E$ be the canonical normal conditional expectation of $M \rtimes_\sigma G$ onto $M$. For every $g \in G$, let $x^g=E(u_g^*x)$. Then we have
\[ x \xi_k - \xi_k x = \sum_{ g \in G} u_g x^g \xi_k - \sum_{ g \in G} \xi_k u_g x^g=\sum_{g \in G} u_g(x^g \xi_k - \sigma_{g^{-1}}(\xi_k) x^g) \]
Hence we get
\[ \sum_k ||x \xi_k -\xi_k x ||^2 = \sum_{g \in G} \sum_k ||x^g \xi_k - \sigma_{g^{-1}}(\xi_k) x^g||^2 \geq \sum_{g \in G \setminus \{ \mathrm{id} \} }||x^g||_\varphi^2= ||x - E(x) ||_\varphi^2 \]
But we also have
\[ || E(x)-\varphi(x)||_\varphi^{2} \leq \sum_k ||E(x) \xi_k - \xi_k E(x) ||^{2} \leq \sum_k ||x \xi_k - \xi_k x ||^{2} \]
Hence, adding both inequalities we get
\[ ||x- \varphi(x)||_\varphi^{2} =|| E(x)-\varphi(x)||_\varphi^{2} + ||x - E(x) ||_\varphi^2 \leq 2 \sum_k ||x \xi_k - \xi_k x ||^{2} \]
as we wanted. This inequality implies that $p(M \rtimes_\sigma G)p$ is full where $p=\mathrm{supp}(\varphi) \in M$. Hence $M \rtimes_\sigma G$ itself is full.
\end{proof}

Before we prove Corollary \ref{ueda}, we first recall that every automorphism $\alpha$ of a von Neumann algebra $M$ extends naturally to an automorphism $\widetilde{\alpha}$ of $c(M)$, called the \emph{quantized modulus} of $\alpha$ \cite[Definition $\mathrm{XII}.6.12$]{TakesakiII}. The map $\mathrm{Aut}(M) \ni \alpha \mapsto \widetilde{\alpha} \in \mathrm{Aut}(c(M))$ is a continuous group homomorphism and if there exists $u \in \mathcal{U}(c(M))$ such that $\alpha(x)=uxu^*$ for all $x \in M$ then we have $\widetilde{\alpha}=\mathrm{Ad}(u)$  \cite[Lemma $\mathrm{XII}.6.14$]{TakesakiII}. 

\begin{proof}[Proof of Corollary \ref{ueda}]
The easy part is the only if direction and was already proved in \cite[Theorem 3.2 and Corollary 3.4]{shlyakhtenko2004classification}. We give a different proof. Suppose that $c(M)$ is full. Let $u_i \in \mathcal{U}(M), i \in I$ be a net of unitaries such that $\mathrm{Ad}(u_i) \rightarrow \mathrm{Id}$ in $\mathrm{Aut}(M)$. Then by the continuity of the quantized modulus we have that $\mathrm{Ad}(u_i) \rightarrow \mathrm{Id}$ in $\mathrm{Aut}(c(M))$. Since $c(M)$ is full, we conclude that there exists $z_i \in \mathbb{U}$ such that $z_iu_i \rightarrow 1$. Hence $M$ is full. Now we show that $\delta : \R \rightarrow \mathrm{Out}(M)$ is a homeomorphism on its range. Take a net $t_i \in \R, i \in I$ such that $\delta(t_i) \rightarrow \mathrm{Id}$. We have to show that $t_i \rightarrow 0$. Take a faithful normal state $\phi$ on $M$. Then there exists a net $u_i \in \mathcal{U}(M), i \in I$ such that $\mathrm{Ad}(u_i) \circ \sigma_{t_i}^\phi \rightarrow \mathrm{Id}$ when $i \rightarrow \infty$. Let $w_i = u_i \phi^{{\rm i} t_i} \in \mathcal{U}(c(M))$. Since the quantized modulus of $\mathrm{Ad}(u_i) \circ \sigma_{t_i}^\phi$ is $\mathrm{Ad}(w_i)$, we know, by continuity, that $\mathrm{Ad}(w_i) \rightarrow \mathrm{Id}$. Since $c(M)$ is full, this implies that there exists $z_i \in \mathbb{U}$ such that $z_i w_i \rightarrow 1$. For every $\lambda \in \R^*_+$, we have $\theta_\lambda(z_iw_i)=\lambda^{{\rm i} t_i}z_iw_i$. Since $z_iw_i  \rightarrow 1$ we get $\lambda^{{\rm i} t_i} \rightarrow 1$. Since this holds for every $\lambda \in \R^*_+$, it implies that $t_i \rightarrow 0$ (because $\R \ni t \mapsto (\lambda \mapsto \lambda^{{\rm i} t}) \in \widehat{\R^{*}_+}$ is an isomorphism of topological groups).

Now we prove the other direction. Suppose that $M$ is full and $\delta : \R \rightarrow \mathrm{Out}(M)$ is a homeomorphism on its range. Pick $T > 0$. Then $\delta(T\Z)$ is discrete in $\mathrm{Out}(M)$. Hence if $\phi$ is any faithful normal state on $M$, we know by Theorem \ref{crossed_product} that $M \rtimes_{\sigma^\phi} T\Z$ is full. Hence $c(M)$ is full by \cite[Lemma 6]{tomatsu2014characterization}.
\end{proof}

 \bibliography{database}
\end{document}